\theoremstyle{definition}
\newtheorem{theorem}{Theorem}[section]
\newtheorem{corollary}[theorem]{Corollary}
\newtheorem{proposition}[theorem]{Proposition}
\newtheorem{example}[theorem]{Example}
\newtheorem{acknowledgments}[theorem]{Acknowledgments}
\DeclareMathOperator{\Con}{Con}
\DeclareMathOperator{\Aut}{Aut}
\DeclareMathOperator{\Hom}{Hom}
\DeclareMathOperator{\id}{id}
\DeclareMathOperator{\im}{im}
\DeclareMathOperator{\ar}{ar}
\DeclareMathOperator{\Id}{Id}
\begin{document}

\title{Extensions realizing affine datum : the Wells derivation}
\author{Alexander Wires}
\address{School of Mathematical Sciences, University of Electronic Science and Technology of China, Chengdu, 611731, Sichuan, PRC }
\email{awires81@uestc.edu.cn}
%\date{November 14, 2022}

\begin{abstract}
We develop the Wells derivation for extensions realizing affine datum in arbitrary varieties; in particular, we show there is an exact sequence connecting the group of compatible automorphisms determined by the datum and the subgroup of automorphisms of an extension which preserves the extension's kernel. This implies a homomorphism between $2^{\mathrm{nd}}$-cohomology groups which realizes a group of kernel-preserving automorphisms of an extension as itself an extension of a subgroup of compatible automorphisms by the group of derivations of the datum. A refinement of this general Wells's-type theorem is given for a restricted class of varieties with a difference term which include any variety of groups with multiple operators in the sense of Higgins. The same results are obtained for nonabelian extensions in any variety of $R$-modules expanded by multilinear operations.
\end{abstract}

\subjclass[2020]{Primary 08A35, 03C05}

\maketitle

%%%%%%%%%%%%%%%%%%%%%%%%%%%%%%%%%%%%%%%%%%%%%%%%%%%%%%%%%%%%%%%%%%%%%%%%%%%%%%%%%%%%%%%%%%%%%%%%%%%%%%%%%%%%%%%%%%%%%%%%%%%%%%%

\section{Introduction}
\vspace{0.3cm}

We consider an application of the machinery developed in Wires \cite{wires1} for extensions which realize affine datum in arbitrary varieties of universal algebras. Our reference is the following theorem of Charles Wells:

\begin{theorem}\label{thm:1}(\cite{wells})
Let $\pi: G \rightarrow Q$ be a surjective group homomorphism with $K = \ker \pi$. There are homomorphisms and a set map $C(Q,K,\alpha) \rightarrow H_{\bar{\alpha}}^{2}(Q,ZK)$ such that
\[
1 \longrightarrow Z_{\bar{\alpha}}^{1}(Q,ZK) \longrightarrow \Aut_{K} G \longrightarrow C(Q,K,\alpha) \longrightarrow H_{\bar{\alpha}}^{2}(Q,ZK)
\]
is exact.
\end{theorem}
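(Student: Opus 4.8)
The plan is to run the classical factor-set argument, organized around the restriction-and-quotient homomorphism. First I would define the middle map $\Lambda\colon \Aut_K G \to C(Q,K,\alpha)$. Since $\gamma \in \Aut_K G$ preserves $K=\ker\pi$, it restricts to $\phi:=\gamma|_K\in\Aut K$ and descends to $\theta\in\Aut Q$ with $\pi\gamma=\theta\pi$. A direct check shows that $(\phi,\theta)$ respects the coupling $\alpha\colon Q\to\mathrm{Out}(K)$ attached to the extension (conjugation of $K$ inside $G$, read modulo inner automorphisms), so $(\phi,\theta)\in C(Q,K,\alpha)$, and $\Lambda$ is visibly a homomorphism. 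Its restriction to $ZK$ induces $\bar\alpha$, which is why the target cohomology is $\bar\alpha$-twisted.

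Next I would compute $\ker\Lambda$. If $\Lambda(\gamma)=(\id_K,\id_Q)$ then $\pi\gamma=\pi$, so $d(g):=g^{-1}\gamma(g)\in K$; using $\gamma|_K=\id_K$ one gets $d(kg)=d(g)$, hence $d$ factors as $\bar d\circ\pi$ for a set map $\bar d\colon Q\to K$; applying $\gamma$ to an element $gkg^{-1}\in K$ forces $\bar d(\pi g)\in ZK$; and the homomorphism identity for $\gamma$ becomes exactly the $1$-cocycle identity, i.e. $\bar d\in Z^{1}_{\bar\alpha}(Q,ZK)$. Conversely $g\mapsto g\cdot\bar d(\pi g)$ lies in $\ker\Lambda$ for every such $\bar d$, and $\bar d\mapsto\bigl(g\mapsto g\,\bar d(\pi g)\bigr)$ is an injective homomorphism. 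This gives exactness at $Z^{1}_{\bar\alpha}(Q,ZK)$ and at $\Aut_K G$.

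Finally I would construct the connecting set map $\omega\colon C(Q,K,\alpha)\to H^{2}_{\bar\alpha}(Q,ZK)$ and establish exactness at $C$. Fix a set-section $s\colon Q\to G$ of $\pi$ with associated factor set $f\colon Q\times Q\to K$ (a nonabelian $2$-cocycle carrying both translation and conjugation data). For $(\phi,\theta)\in C(Q,K,\alpha)$, the transported assignment $(x,y)\mapsto\phi\bigl(f(\theta^{-1}x,\theta^{-1}y)\bigr)$ is again a factor set for an extension with the same coupling as $G$; hence it differs from $f$ by a genuinely abelian $2$-cocycle valued in $ZK$, and I define $\omega(\phi,\theta)$ to be its class. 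Changing $s$ alters $f$ by a $K$-valued coboundary and leaves this class fixed, so $\omega$ is well defined. Then $(\phi,\theta)\in\im\Lambda$ iff some $\gamma$ realizing it exists iff the transported extension is equivalent to $G$ iff $\omega(\phi,\theta)=0$, which is exactness at $C$.

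The routine parts are the first two paragraphs; the real work is in the third, where one must manipulate nonabelian factor sets carefully — verifying that the transported cocycle carries the same coupling (so the difference is $ZK$-valued and its $H^{2}$-class is defined), checking section-independence, and unwinding the two "if and only if" statements into explicit automorphisms and equivalences of extensions. I would not expect $\omega$ to be a homomorphism in general, only a pointed set map, so exactness at $C$ must be phrased as $\omega^{-1}(0)=\im\Lambda$ rather than as an equality of kernels and images. This factor-set bookkeeping is exactly what the apparatus of \cite{wires1} is built to handle uniformly, so in the present setting I would import those lemmas rather than redo the computations by hand.
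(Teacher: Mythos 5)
Your outline is the classical factor-set proof of Wells' theorem and is essentially sound: the restriction-and-quotient homomorphism $\Lambda$, the identification of $\ker\Lambda$ with $Z^{1}_{\bar\alpha}(Q,ZK)$ via $g\mapsto g^{-1}\gamma(g)$, and the connecting set map defined by transporting a factor set and measuring its deviation from the original are exactly the ingredients of \cite{wells}. Be aware, though, that the paper does not prove Theorem~\ref{thm:1} at all --- it is quoted as motivation --- so the only meaningful comparison is with the proof of Theorem~\ref{thm:2}, whose architecture your argument mirrors step for step in the group case: your $\Lambda$ is the paper's $\psi(\phi)=(\widehat{\phi},\phi_{l})$, your kernel computation is the paper's identification $\ker\psi=\mathrm{Stab}(\pi)\approx\mathrm{Der}$, your $\omega$ is $W_{T}(\sigma,\kappa)=[T-T^{(\sigma,\kappa)}]$, and your exactness argument at $C$ (turn a coboundary witness into an explicit automorphism inducing $(\phi,\theta)$, and conversely recognize $T^{(\widehat{\phi},\phi_{l})}$ as the cocycle of the shifted lifting $\phi\circ l\circ\phi_{l}^{-1}$) is the paper's Eq.~(\ref{eqn:10})--(\ref{eqn:13}). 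Where you genuinely diverge is that your third paragraph does the nonabelian work --- reducing the difference of two factor sets carrying the same coupling $\alpha:Q\rightarrow\mathrm{Out}(K)$ to a $ZK$-valued abelian $2$-cocycle --- which the paper explicitly declines to generalize, restricting instead to abelian kernels (affine datum) where that reduction is vacuous. For that reason your closing remark is slightly off: the apparatus of \cite{wires1} covers only the affine (abelian-kernel) case, so the nonabelian bookkeeping in your third paragraph cannot be imported from there and must be done by hand (or cited from \cite{wells} or \cite{autgrp}). Your observation that $\omega$ is only a pointed set map, so that exactness at $C$ means $\omega^{-1}(0)=\im\Lambda$, is correct and consistent with the statement; the paper's additional claim that the map is a principal derivation for the action of $C$ on $H^{2}_{\bar\alpha}(Q,ZK)$ is not needed for exactness and you rightly omit it.
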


The extension $\pi: G \rightarrow Q$ determines datum $(Q,K,\alpha)$ where the homomorphism $\alpha:Q \rightarrow \mathrm{Out} \, K$ is induced by the conjugation action of $Q$ on $K$ afforded by a lifting of $\pi$. The group $\Aut_{K} G$ is the subgroup of automorphisms of $G$ which preserve $K$ set-wise. There is an action of $\Aut K \times \Aut Q$ on the 2-cochains of the datum given by simultaneously permuting their domains and codomains in the natural way. The subgroup $C(Q,K,\phi) \leq \Aut K \times \Aut Q$ consists of those pairs of automorphisms which satisfy a compatibility condition concerning the homomorphism $\alpha$ which guarantees that the set of 2-cocycles is closed under the action of the subgroup $C(Q,K,\alpha)$. The map $C(Q,K,\alpha) \rightarrow H_{\bar{\alpha}}^{2}(Q,ZK)$ is commonly referred to as the \emph{Wells map}, or \emph{Wells derivation}, and is indeed a principal derivation for the action of compatible automorphisms $C(Q,K,\alpha)$ on $H_{\bar{\alpha}}^{2}(Q,ZK)$. The proof of Wells's theorem is intimately related to how the equivalence on 2-cocycles which defines second-cohomology classes is characterized by stabilizing isomorphisms between extensions. A more thorough explanation can be found in the monograph of Passi, Singh and Yadav \cite{autgrp} and in Wells \cite{wells}, of course.

It is important to note that Wells's theorem applies to all group extensions and not just those with abelian kernels; however, we can utilize the development in Wires \cite{wires1} to consider Wells's argument in the restricted case of group extensions with abelian kernels and extend it to the general setting of extensions realizing affine datum in arbitrary varieties of universal algebras. This is the principal content of this manuscript.

\begin{theorem}\label{thm:2}
Suppose $\mathcal U$ is a variety containing affine datum $(Q,A^{\alpha,\tau},\ast)$ which is realized by an extension $\pi: A \rightarrow Q$ with associated 2-cocycle $T$. Then we have the exact sequence
\begin{align*}
 1 \longrightarrow \mathrm{Der}(Q,A^{\alpha,\tau},\ast)  \longrightarrow \Aut_{\alpha} A \stackrel{\psi}{\longrightarrow} C(Q,A^{\alpha,\tau},\ast) \stackrel{W_{T}}{\longrightarrow} H^{2}_{\mathcal U}(Q,A^{\alpha,\tau},\ast).
\end{align*}
\end{theorem}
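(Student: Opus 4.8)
The plan is to run the classical argument of Wells, but with the cohomological bookkeeping of group extensions replaced by the description, from \cite{wires1}, of extensions realizing a fixed affine datum: the fibres of $\pi$ are affine over the abelian algebra $A^{\alpha,\tau}$, the basic operations of $A$ are recovered from those of $Q$ together with $\alpha$, $\tau$ and the $2$-cocycle $T$ relative to a chosen transversal, and two such extensions are equivalent precisely when their cocycles are cohomologous in $H^{2}_{\mathcal U}(Q,A^{\alpha,\tau},\ast)$. Fix once and for all a set section $\ell\colon Q\to A$ of $\pi$, normalized at the identity, with associated cocycle $T$.

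Next I would build $\psi$. An automorphism $\gamma\in\Aut_{\alpha}A$ preserves the kernel congruence of $\pi$, so it descends to $\psi_{Q}(\gamma)\in\Aut Q$ and restricts to an automorphism $\psi_{A}(\gamma)$ of the abelian algebra $A^{\alpha,\tau}$; since $\gamma$ respects all basic operations of $A$ --- including those encoding $\alpha$, $\tau$ and $\ast$ --- the pair $\psi(\gamma):=(\psi_{A}(\gamma),\psi_{Q}(\gamma))$ satisfies the intertwining relations that define $C(Q,A^{\alpha,\tau},\ast)\leq\Aut A^{\alpha,\tau}\times\Aut Q$. This yields the homomorphism $\psi$. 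If $\psi(\gamma)=(\id,\id)$ then $\gamma$ fixes the kernel pointwise and each fibre setwise, hence is determined by the displacement cochain $d_{\gamma}\colon Q\to A^{\alpha,\tau}$ defined by $\gamma(\ell q)=\ell q + d_{\gamma}(q)$ (addition in the fibre over $q$); unwinding the requirement that $\gamma$ preserve the $T$-twisted operations of $A$ shows $d_{\gamma}$ satisfies exactly the identities defining a derivation of the datum, and conversely each $d\in\mathrm{Der}(Q,A^{\alpha,\tau},\ast)$ yields such a $\gamma_{d}$. A short check gives $d_{\gamma'\gamma}=d_{\gamma'}+d_{\gamma}$ (the $\alpha$-twisting terms cancel because both factors act trivially on $Q$ and on the kernel), so $\ker\psi\cong\mathrm{Der}(Q,A^{\alpha,\tau},\ast)$; this isomorphism is the first nontrivial map of the sequence and is visibly injective.

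For the Wells map I would use the compatibility conditions to see that each $\sigma=(\theta_{A},\theta_{Q})\in C(Q,A^{\alpha,\tau},\ast)$ transports $T$ to a $2$-cocycle $\sigma\cdot T$ \emph{for the same datum} --- this is exactly where ``compatible'' is the hypothesis one needs. Because replacing $\ell$ by another section changes $T$ by a coboundary and $\sigma$ acts by algebra maps, the class $W_{T}(\sigma):=[\sigma\cdot T]-[T]\in H^{2}_{\mathcal U}(Q,A^{\alpha,\tau},\ast)$ is independent of the section, and a direct computation gives $W_{T}(\sigma'\sigma)=\sigma'\cdot W_{T}(\sigma)+W_{T}(\sigma')$, so $W_{T}$ is the principal derivation for the natural action of $C(Q,A^{\alpha,\tau},\ast)$ on $H^{2}_{\mathcal U}$; in particular $W_{T}^{-1}(0)$ is a subgroup.

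It remains to prove exactness, and the crux is at $C(Q,A^{\alpha,\tau},\ast)$. Composites vanish: for $\gamma\in\Aut_{\alpha}A$ the map $\gamma$ is itself a stabilizing equivalence between the extension with cocycle $T$ and the one with cocycle $\psi(\gamma)\cdot T$, so those cocycles are cohomologous and $W_{T}(\psi(\gamma))=0$. The reverse inclusion $\ker W_{T}\subseteq\im\psi$ is the main obstacle: given $\sigma=(\theta_{A},\theta_{Q})$ with $\sigma\cdot T-T=\delta h$ for a $1$-cochain $h$, one assembles $\theta_{A}$, $\theta_{Q}$ and $h$ into a map $\gamma\colon A\to A$ and must verify, using the equivalence-of-extensions correspondence of \cite{wires1}, that $\gamma$ is a genuine $\mathcal U$-automorphism of $A$ preserving the kernel with $\psi(\gamma)=\sigma$; the delicate point is that all $\mathcal U$-term operations of $A$ --- in the module setting, the multilinear operations in particular --- are respected by $\gamma$, which is precisely where the machinery of \cite{wires1} replaces ad hoc cocycle manipulation. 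Exactness at $\Aut_{\alpha}A$ is the identification $\ker\psi=\mathrm{Der}(Q,A^{\alpha,\tau},\ast)$ established above, and exactness at $\mathrm{Der}(Q,A^{\alpha,\tau},\ast)$ is the injectivity already noted. Finally, for a variety of $R$-modules expanded by multilinear operations the same argument applies verbatim, since there the datum, its derivations, and $H^{2}_{\mathcal U}$ reduce to the ordinary linear notions and every manipulation above is $R$-linear.
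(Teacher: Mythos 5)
Your overall strategy coincides with the paper's: the same homomorphism $\psi$, the same action $T^{(\sigma,\kappa)}_f(\vec q)=\sigma\circ T_f(\kappa^{-1}(\vec q))$ with $W_T$ the associated principal derivation, the identification $\ker\psi\approx\mathrm{Der}(Q,A^{\alpha,\tau},\ast)$ via stabilizing automorphisms, and the observation that $W_T\circ\psi=0$ because $T^{(\widehat\phi,\phi_l)}$ is the $2$-cocycle of the alternative lifting $\phi\circ l\circ\phi_l^{-1}$. Two small points of formulation: the first coordinate of $\psi(\phi)$ is not a restriction of $\phi$ to a subalgebra of $A$ but the induced automorphism $\widehat\phi$ of $A(\alpha)/\Delta_{\alpha\alpha}$, and membership in $C(Q,A^{\alpha,\tau},\ast)$ means verifying conditions (C1)--(C3) against the action terms $a(f,i)$, the partial operations $f^{\Delta}$, and the diagonal classes, which your phrase ``satisfies the intertwining relations'' compresses without checking.

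The genuine gap is at the step you yourself identify as the crux: the inclusion $\ker W_T\subseteq\im\psi$ is announced (``one assembles $\theta_A$, $\theta_Q$ and $h$ into a map $\gamma$ and must verify\dots'') but never carried out, and this verification constitutes the bulk of the paper's proof. Concretely, the paper defines $\bar\phi(x)=\sigma(x)+_u h(\kappa\circ\rho(x))$ on $A_T(Q,A^{\alpha,\tau},\ast)$ with basepoint $u=l\circ\rho\circ\sigma(x)$, transports it to $\phi\in\Aut_{\alpha}A$ through the isomorphism $x\mapsto\begin{bmatrix} r(x)\\ x\end{bmatrix}/\Delta_{\alpha\alpha}$, and then must separately establish: that $\pi\circ\phi=\kappa\circ\pi$ (hence $\phi_l=\kappa$), using (C2) and idempotence of $m$; that $\widehat\phi=\sigma$, which requires the fact that each $\hat\alpha/\Delta_{\alpha\alpha}$-class contains a unique diagonal class because $\alpha$ is abelian; bijectivity of $\bar\phi$; and the homomorphism property, which uses the coboundary identity for $h$ after the substitution $\vec q\mapsto\kappa(\vec q)$ together with careful tracking of the basepoints $u$, $v$, $w$ of the partial additions $+_u$. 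None of this transfers ``verbatim'' from the group case --- the basepoint bookkeeping and the diagonal-class argument are precisely where the affine-datum formalism departs from ordinary cocycle manipulation --- so without these verifications the proof is incomplete.
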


For group datum $(Q,K,\phi)$ with $K$ abelian, we write $K \rtimes_{(\phi,f)} Q$ for the extension realizing the datum defined by modifying the operations of the semidirect product $K \rtimes_{\phi} Q$ by the addition of the 2-cocycle $f$.

\begin{theorem}\label{thm:3}
Let $(Q,A^{\alpha,\tau},\ast)$ be affine datum contained in the variety $\mathcal U$. For each $[T] \in H^{2}_{\mathcal U}(Q,A^{\alpha,\tau},\ast)$ there is a group action $\phi_{T}: \ker W_{T} \rightarrow \Aut \mathrm{Der}(Q,A^{\alpha,\tau},\ast)$ which realizes the semidirect product 
\[
\Aut_{\hat{\alpha}} A_{T}(Q,A^{\alpha,\tau},\ast) \approx \mathrm{Der}(Q,A^{\alpha,\tau},\ast) \rtimes_{\phi_{T}} \ker W_{T} ;
\] 
in particular, $\Aut_{\hat{\alpha}} A(\alpha)/\Delta_{\alpha \alpha} \approx \mathrm{Der}(Q,A^{\alpha,\tau},\ast) \rtimes_{\phi_{T}} C(Q,A^{\alpha,\tau},\ast)$.
\end{theorem}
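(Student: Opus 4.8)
The plan is to read the desired extension directly off the four-term exact sequence of Theorem~\ref{thm:2}. Fix $[T] \in H^2_{\mathcal U}(Q,A^{\alpha,\tau},\ast)$ and let $A_T$ denote the extension realizing the datum with cocycle $T$. Exactness at $C(Q,A^{\alpha,\tau},\ast)$ gives $\im \psi = \ker W_T$ (a subgroup, since $W_T$ is a derivation), and exactness at $\Aut_\alpha A$ identifies $\mathrm{Der}(Q,A^{\alpha,\tau},\ast)$ with $\ker\psi$; after verifying that the middle group of Theorem~\ref{thm:2} is, for the realization $A_T$, the group $\Aut_{\hat\alpha} A_T$ appearing here, truncation yields the short exact sequence
\[
1 \longrightarrow \mathrm{Der}(Q,A^{\alpha,\tau},\ast) \longrightarrow \Aut_{\hat\alpha} A_T \stackrel{\psi}{\longrightarrow} \ker W_T \longrightarrow 1 .
\]
Because the datum is affine, $\mathrm{Der}(Q,A^{\alpha,\tau},\ast)$ is abelian, so the classical classification of abelian-kernel extensions applies verbatim.

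Next I would extract the invariants of this extension. Conjugation in $\Aut_{\hat\alpha} A_T$ preserves the normal abelian subgroup $\mathrm{Der}(Q,A^{\alpha,\tau},\ast)$, and since the latter is abelian this conjugation factors through $\psi$ to a well-defined action $\phi_T : \ker W_T \rightarrow \Aut \mathrm{Der}(Q,A^{\alpha,\tau},\ast)$. Choosing a set-section $s : \ker W_T \rightarrow \Aut_{\hat\alpha} A_T$ of $\psi$ and setting
\[
\Pi([T])(c_1,c_2) := s(c_1)\,s(c_2)\,s(c_1 c_2)^{-1} \in \mathrm{Der}(Q,A^{\alpha,\tau},\ast)
\]
gives a normalized $2$-cocycle whose class lies in $H^2(\ker W_T, \mathrm{Der}(Q,A^{\alpha,\tau},\ast), \phi_T)$; the cocycle identity and independence from $s$ are the usual factor-set verifications. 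Reconstituting $\Aut_{\hat\alpha} A_T$ from $(\phi_T,\Pi([T]))$ by the crossed-product multiplication produces the asserted isomorphism $\Aut_{\hat\alpha} A_T \approx \mathrm{Der}(Q,A^{\alpha,\tau},\ast) \rtimes_{(\phi_T,\Pi([T]))} \ker W_T$.

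The real content is that $\Pi$ is well defined on cohomology classes and additive on $\ker W$. For well-definedness I would start from a stabilizing isomorphism between two realizations $A_T$ and $A_{T'}$ with $[T]=[T']$, transport it to an isomorphism of the two short exact sequences above, and track its effect on a section to conclude that $\Pi([T])$ and $\Pi([T'])$ differ by a coboundary. For additivity, note that on $\ker W$ we have $\ker W_T = C(Q,A^{\alpha,\tau},\ast)$ and the action $\phi_T$ stabilizes, so all values of $\Pi$ lie in the single group $H^2(C(Q,A^{\alpha,\tau},\ast),\mathrm{Der}(Q,A^{\alpha,\tau},\ast),\phi_T)$; I would then identify the Baer sum of extensions representing $[T]+[T']$ with the Baer sum of the associated automorphism-group extensions, giving $\Pi([T]+[T']) = \Pi([T])+\Pi([T'])$. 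Matching the additive structure on $H^2_{\mathcal U}$ furnished by the machinery of Wires~\cite{wires1} with the Baer sum on the target group is the delicate compatibility, and I expect this to be the main obstacle.

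For the concluding isomorphism I would specialize to the split realization. Taking $A(\alpha)$ to be the extension with trivial cocycle places its class at the identity of $\ker W$, where $\ker W_T = C(Q,A^{\alpha,\tau},\ast)$; since $\Pi$ is a homomorphism on $\ker W$ it sends this identity to the trivial class, so the short exact sequence splits and the twisting factor disappears. The passage to the quotient by $\Delta_{\alpha\alpha}$ records exactly the reduction from $\Aut_{\hat\alpha} A(\alpha)$ to the kernel-preserving quotient occurring in the short exact sequence, and yields $\Aut_{\hat\alpha} A(\alpha)/\Delta_{\alpha\alpha} \approx \mathrm{Der}(Q,A^{\alpha,\tau},\ast) \rtimes_{\phi_T} C(Q,A^{\alpha,\tau},\ast)$.
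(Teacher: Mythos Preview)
Your overall architecture---truncate the four-term sequence of Theorem~\ref{thm:2} to a short exact sequence with abelian kernel, then read off the action and factor set---is exactly what the paper does, and your treatment of the ``in particular'' clause via $\Pi([0])=0$ is correct (though note that $A(\alpha)/\Delta_{\alpha\alpha}$ is the name of the semidirect-product \emph{algebra}, not a quotient of an automorphism group).

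Where you diverge is in the additivity of $\Pi$ on $\ker W$. You propose to match the Baer sum on $H^{2}_{\mathcal U}$ with the Baer sum of the associated automorphism-group extensions, and you rightly flag this as the main obstacle: there is no obvious reason $[T]\mapsto[\text{the extension }\Aut_{\hat\alpha}A_T]$ should be additive in that sense, and I do not see how to close that gap abstractly. The paper avoids this entirely by making the section \emph{explicit}. For each $(\sigma,\kappa)\in\ker W_T$ one fixes a witness $h^{T}_{(\sigma,\kappa)}$ to the coboundary relation $T^{(\sigma,\kappa)}\sim T$, and then defines the section $\hat l_T(\sigma,\kappa)(x):=\sigma(x)+_{u}h^{T}_{(\sigma,\kappa)}(\kappa\circ\pi(x))$ (this is the same formula used in the proof of Theorem~\ref{thm:2} to produce preimages under $\psi$). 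A direct, if lengthy, evaluation of $\hat l_T(\gamma,\beta)\circ\hat l_T(\sigma,\kappa)\circ\hat l_T(\gamma\sigma,\beta\kappa)^{-1}$ shows that the resulting derivation is given by an explicit expression in $h^{T}_{(\gamma,\beta)}$, $h^{T}_{(\sigma,\kappa)}$, and $h^{T}_{(\gamma\sigma,\beta\kappa)}$. Additivity then falls out immediately from the elementary observation that one may choose $h^{T+T'}=h^{T}+h^{T'}$. So the paper trades your functorial compatibility problem for a concrete calculation; the price is the computation, the payoff is that additivity becomes a one-line consequence of how coboundary witnesses add.
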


A difference term $m$ for a variety $\mathcal V$ is \emph{weakly-associative} if $\mathcal V \vDash x = m(m(x,y,z),z,y)$. By restricting to varieties with a weakly-associative difference term, the representation of extensions with abelian kernels and idempotent elements takes a form more closely analogous to the classic group case \cite[Sec 4]{wires1}. This provides a further decomposition of the compatible automorphisms into certain pairs of automorphisms of the quotient algebra and a privileged congruence class (Proposition~\ref{prop:diffclass}). We then use the refined representation to translate Theorem~\ref{thm:2} for these types of algebras.

\begin{corollary}\label{cor:1}
Let $A \in \mathcal V$ a variety with a weakly-associative difference term $m$ and $u \in A$ is a characteristic idempotent. If $\alpha \in \Con A$ is abelian and affords a representation $A \approx I_{\alpha} \rtimes_{\ast,T} Q$, then there is an exact sequence
\begin{align}\label{eqn:idemwells}
1 \longrightarrow \mathrm{Der}(Q,I_{\alpha},\ast)  \longrightarrow \Aut_{I_{\alpha}} A \stackrel{\psi}{\longrightarrow} C(Q,I_{\alpha},\ast) \stackrel{W_{T}}{\longrightarrow} H^{2}_{\mathcal V}(Q,I_{\alpha},\ast).
\end{align}
\end{corollary}

The hypothesis of the previous corollary is quite restrictive for general varieties with a difference term; nevertheless, it is satisfied for any variety of groups with multiple operators in the sense of Higgins \cite{higgins}.

The paper Wires \cite{wires1} examines the deconstruction/reconstruction of extensions realizing affine datum in arbitrary varieties of universal algebras. As a step toward general extensions (what is often called non-abelian cohomology), the paper Wires \cite{multiext} also examines the parameters for characterizing extensions in varieties of $R$-modules expanded by multilinear operations. For abelian ideals with unary actions, this agrees with the development for general affine datum. Varieties of $R$-modules expanded by multilinear operations can be seen as a special case of Higgin's groups with multiple operators formalisms \cite{higgins}, but are still general enough to include many examples of multilinear algebras such as rings (Everett \cite{rings}), associative algebras (Agore and Militaru \cite{agore}, Hochschild \cite{hoch}), Lie algebras (Inassaridze, Khmaladze and Ladra \cite{lie}), Liebniz algebras (Casas, Khmaladze and Ladra \cite{leibniz}), dendriform algebras and bilinear Rota-Baxter algebras (Das and Rathee \cite{das}), Lie-Yamaguti algebras (Yamaguti \cite{yamaguti}) or conformal algebras (Bakalov, Kac and Voronov\cite{conform1}, Hou and Zhao \cite{conform2}, Smith \cite{smith1}) to name just a few well-studied classes. The constructions in \cite{wires1} parametrizing nonabelian extensions recovers in a uniform manner the cohomological classification of extensions previously developed for these different varieties. As examples of groups with multiple operators, Corollary~\ref{cor:1} is then the analogue of Wells's Theorem for extensions with abelian ideals. The nonabelian version of Wells's Theorem for these varieties proceeds in the same manner as the affine datum case with a modification; namely, the values of the Wells map resides in the free abelian group generated by $2^{\mathrm{nd}}$-cohomology for the variety. This is done to provide the Wells map with an appropriate codomain since cohomology classes in $H^{2}_{\mathcal V}(Q,I)$ for an arbitrary subvariety $\mathcal V$ may not be closed under the natural addition of 2-cocycles induced by $I$.

\begin{theorem}\label{thm:4}
Let $\mathcal V$ be a variety of $R$-modules expanded by multilinear operations. If $A \in \mathcal V$ is an extension $\pi: A \rightarrow Q$ with $I = \ker \pi$ and has associated 2-cocycle $T$, then there exists an exact sequence
\[
1 \longrightarrow \mathrm{Der}(Q,I) \longrightarrow \Aut_{I} A \stackrel{\psi}{\longrightarrow} \Aut I \times \Aut Q \stackrel{W_{T}}{\longrightarrow} FA \left( H^{2}_{\mathcal V}(Q,I) \right).
\]
\end{theorem}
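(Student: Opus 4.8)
\emph{Proof proposal.} The plan is to transcribe the proof of Theorem~\ref{thm:1}, in the refined form used for Theorem~\ref{thm:2}, into the variety $\mathcal V$ of $R$-modules expanded by multilinear operations, the two changes being that the derivation now takes values in the free abelian group $FA\big(H^{2}_{\mathcal V}(Q,I)\big)$ on the pointed set of cohomology classes and that its domain is the whole group $\Aut I\times\Aut Q$, no compatibility being imposed. Fix a transversal $\ell\colon Q\to A$ of $\pi$ realizing the $2$-cocycle $T$; by the cocycle presentation of extensions from \cite{wires1, multiext} this identifies $A$ with $I\times Q$ equipped with the $\mathcal V$-operations twisted by $T$ and with the datum read off from $I=\ker\pi$. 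Define $\psi\colon \Aut_{I}A\to \Aut I\times\Aut Q$ by $\psi(\sigma)=\big(\sigma|_{I},\bar\sigma\big)$, where $\bar\sigma\in\Aut Q$ is the automorphism induced on $A/I\cong Q$ by a $\sigma$ that stabilizes $I$ set-wise; both components are well defined and $\psi$ is a homomorphism. The left-hand injection sends $d\in\mathrm{Der}(Q,I)$ to $\sigma_{d}(i,q)=(i+d(q),q)$; the datum identities show $\sigma_{d}\in\Aut_{I}A$, and $d\mapsto\sigma_{d}$ is an injective homomorphism out of the (abelian) group of derivations.

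Exactness at $\mathrm{Der}(Q,I)$ is this injectivity. For exactness at $\Aut_{I}A$, suppose $\psi(\sigma)=(\id_{I},\id_{Q})$: then $\sigma$ fixes $I$ pointwise and $\pi\sigma=\pi$, so $\sigma(i,q)=(i+d(q),q)$ with $d(q)$ the $I$-coordinate of $\sigma(0,q)$, and the requirement that $\sigma$ respect each twisted operation of $\mathcal V$ is exactly the requirement that $d$ be a derivation of the datum; hence $\ker\psi=\{\sigma_{d}:d\in\mathrm{Der}(Q,I)\}$, which together with $\sigma_{d}\in\ker\psi$ gives $\im(\mathrm{Der}(Q,I)\to\Aut_{I}A)=\ker\psi$.

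Next I construct the Wells derivation. The group $\Aut I\times\Aut Q$ acts on $2$-cochains of the datum by relabelling domain and codomain in the natural way described after Theorem~\ref{thm:1}, carrying $2$-cocycles to $2$-cocycles; thus $(\theta,\eta)\cdot T$ again determines a $\mathcal V$-extension of $Q$ by $I$, namely the algebra $A$ with its inclusion relabelled along $\theta$ and its projection along $\eta$, taken with the correspondingly relabelled transversal. Set
\[
W_{T}(\theta,\eta)\;=\;\big[(\theta,\eta)\cdot T\big]-[T]\;\in\;FA\big(H^{2}_{\mathcal V}(Q,I)\big).
\]
Passing to the free abelian group is what makes this difference meaningful: for the subvariety $\mathcal V$ the set $H^{2}_{\mathcal V}(Q,I)$ need not be closed under the addition of $2$-cocycles induced by $I$, and moreover a non-compatible $(\theta,\eta)$ may change the datum induced by $I$, so the class of the relabelled extension must be allowed a larger home. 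One then checks that $W_{T}$ is independent of the transversal $\ell$ and of the cocycle representative of $[T]$ --- changing either alters $T$ and $(\theta,\eta)\cdot T$ by the same coboundary --- and that $W_{T}$ is a derivation for the evident action of $\Aut I\times\Aut Q$ on $FA\big(H^{2}_{\mathcal V}(Q,I)\big)$, so that $W_{T}^{-1}(0)$ is a subgroup.

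Finally I would establish exactness at $\Aut I\times\Aut Q$, i.e. $\im\psi=W_{T}^{-1}(0)$. A pair $(\theta,\eta)$ lies in $\im\psi$ precisely when the relabelled extension above admits a stabilizing equivalence to $A$ itself; since that extension carries the cocycle $(\theta,\eta)\cdot T$, this happens iff $[(\theta,\eta)\cdot T]=[T]$, iff $W_{T}(\theta,\eta)=0$ in $FA\big(H^{2}_{\mathcal V}(Q,I)\big)$, the last step because the cohomology classes form the free generating set. Making this chain rigorous is the main obstacle. One must verify that relabelling $A$ yields a bona fide $\mathcal V$-extension with the stated cocycle; that, for extensions lying in the subvariety $\mathcal V$, a stabilizing equivalence corresponds exactly to cohomologousness of the cocycles even though the set of $\mathcal V$-cocycles is only a pointed set and not a group; and --- the crux --- that $W_{T}(\theta,\eta)=0$ already forces $(\theta,\eta)$ to be compatible with the datum, so that the comparison of classes legitimately takes place within a single $H^{2}_{\mathcal V}(Q,I)$ and the stabilizing equivalence it produces can be spliced with the two relabelling isomorphisms into an $I$-stabilizing automorphism $\sigma$ of $A$ with $\psi(\sigma)=(\theta,\eta)$. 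Given the cocycle calculus of \cite{wires1, multiext}, the remaining verifications are routine if lengthy.
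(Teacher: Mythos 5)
Your outline follows the same route as the paper: the same $\psi(\phi)=(\phi|_{I},\phi_{l})$, the same kernel identification with $\mathrm{Der}(Q,I)$, the same Wells difference $W_{T}$ valued in the free abelian group, and exactness at $\Aut I\times\Aut Q$ via the correspondence between cohomologous cocycles and stabilizing equivalences. But you have misplaced the crux. You worry that one must first show ``$W_{T}(\theta,\eta)=0$ forces $(\theta,\eta)$ to be compatible with the datum'' before the comparison of classes is legitimate; as you set things up (twisting only the components $T_{f}$ ``in the natural way described after Theorem~\ref{thm:1}''), this worry is real and in fact circular, since $(\theta,\eta)\cdot T$ would not be a cocycle for the fixed datum unless $(\theta,\eta)$ were already compatible, so $W_{T}(\theta,\eta)$ could not even be evaluated on a general pair. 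The paper dissolves this at the level of definitions: in the nonabelian setting the action terms $a(f,s)$ are part of the $2$-cocycle, and the action of $(\sigma,\kappa)$ twists them too, by $a(f,s)^{(\sigma,\kappa)}(\vec{x},\vec{a})=\sigma\circ a(f,s)(\kappa^{-1}(\vec{x}),\sigma^{-1}(\vec{a}))$ --- note the extra $\sigma^{-1}$ on the kernel arguments, which has no analogue in the affine case. With this, the \emph{full} product $\Aut I\times\Aut Q$ acts on $Z^{2}_{\mathcal V}(Q,I)$ from the outset, every $[T^{(\sigma,\kappa)}]$ lies in $H^{2}_{\mathcal V}(Q,I)$, and there is no residual compatibility condition to extract from $W_{T}=0$; the free abelian group is needed only because the \emph{difference} of two $\mathcal V$-compatible classes need not be one. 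Your geometric description of the action (relabel the inclusion along $\theta$ and the projection along $\eta$ and recompute the cocycle of the relabelled extension) does produce exactly this twisted cocycle if carried out, so your plan is salvageable, but you should state the formula rather than import the affine one.

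The second point is that what you defer as ``routine if lengthy'' is the actual body of the paper's proof: from $[T]=[T^{(\sigma,\kappa)}]$ one extracts $h:Q\rightarrow I$ satisfying the coboundary identities for all four families of cocycle components (including the action terms), defines $\hat{l}(\sigma,\kappa)\left\langle a,x\right\rangle=\left\langle \sigma(a)-h(\kappa(x)),\kappa(x)\right\rangle$ on $I\rtimes_{T}Q$, and verifies by an inclusion--exclusion computation over the multilinear operations that this is a homomorphism with $\psi(\hat{l}(\sigma,\kappa))=(\sigma,\kappa)$. This is precisely the splice of your relabelling map with your stabilizing map, so no new idea is missing, but the verification is where the identities (and the correct twist of the action terms) are actually used.
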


\vspace{0.5cm}

%%%%%%%%%%%%%%%%%%%%%%%%%%%%%%%%%%%%%%%%%%%%%%%%%%%%%%%%%%%%%%%%%%%%%%%%%%%%%%%%%%%%%%%%%%%%%%%%%%%%%%%%%%%%%%%%%%%%%%%%%%%%%%%

\section{Preliminaries}\label{section:2}
\vspace{0.3cm}

In this section, we define the maps and the groups which appear in Theorem~\ref{thm:2}. For a tuple $\vec{a}=(a_1,\ldots,a_n) \in A^{n}$, the partial tuples determined by $1 \leq i < n$ are denoted by $\vec{a}_{i} = (a_1,\ldots,a_i)$ and $_{i}\vec{a} = (a_{i+1},\ldots,a_{n})$. For a map $f:A \rightarrow B$ and $\vec{a} \in A^{n}$, we write $f(\vec{a}) = (f(a_{1}),\ldots,f(a_{n})) \in B^{n}$ for the tuple of coordinate-wise evaluations.

Fix affine datum $(Q,A^{\alpha,\tau},\ast)$ in the signature $\tau$. According to Wires \cite{wires1}, there is a unique semidirect product $\rho: A(\alpha)/\Delta_{\alpha \alpha} \rightarrow Q$ realizing the datum which can be reconstructed from the operations $F_{f}$ for each signature symbol $f \in \tau$ which are given by
\[
F_{f}(\vec{a}) = f^{\Delta} \left( a_{1},\delta(l \circ \rho(a_{2})),\ldots,\delta(l \circ \rho(a_{n})) \right) +_{u} \sum_{i=2}^{n} a(f,i) \left(  \rho(a_{1}),\ldots,\rho(a_{i-1}),a_{i},\rho(a_{i+1}),\ldots,\rho(a_{n}) \right)  
\]
for $\vec{a} \in A(\alpha)/\Delta_{\alpha \alpha}$ where $n=\ar f$ and $u=l \circ f^{Q}(\rho(\vec{a}))$ for any choice of lifting $l:Q \rightarrow A$ associated to the datum. The operation $+_{u}$ is given by $x +_{u} y = m(x,\delta(u),y)$ for the ternary operation $m$ prescribed by the datum. The map $\delta : A \rightarrow A(\alpha)/\Delta_{\alpha \alpha}$ is the diagonal embedding $\delta(a) = \begin{bmatrix} a \\ a \end{bmatrix}/\Delta_{\alpha \alpha}$. Note for unary and nullary symbols $f \in \tau$, the operations $F_{f}$ are interpreted using only the partial operations $f^{\Delta}$ of the partial structure $A^{\alpha,\tau}$. According to Wires \cite{wires1}, the extensions in a variety $\mathcal U$ which realize the datum can be uniquely reconstructed by the algebras $A_{T}(Q,A^{\alpha,\tau},\ast)$ parametrized by the $\mathcal U$-compatible 2-cocycles $T$. The operations of the algebra are defined by adding the parameter $T$ to the operations of the semidirect product according to 
\begin{align*}
\begin{split}
F_{f}(\vec{a}) &= f^{\Delta} \left( a_{1},\delta(l \circ \rho(a_{2})),\ldots,\delta(l \circ \rho(a_{n})) \right) +_{u} \sum_{i=2}^{n} a(f,i) \left(  \rho(a_{1}),\ldots,\rho(a_{i-1}),a_{i},\rho(a_{i+1},\ldots,\rho(a_{n}) \right) \\
&\quad +_{u} T_{f} \left( \rho(\vec{a}) \right).
\end{split}
\end{align*}
Taken together, the $\mathcal U$-compatible 2-cocycles form the $2^{\mathrm{nd}}$-cohomology group $H^{2}_{\mathcal U}(Q,A^{\alpha,\tau},\ast)$ parametrizing as an abelian group the extensions in $\mathcal U$ realizing the datum. Details can be found in Wires \cite{wires1}.

The \emph{compatible automorphisms} for the datum is the set $C(Q,A^{\alpha,\tau},\ast)$ which consists of all pairs $(\sigma,\kappa) \in \Aut A(\alpha)/\Delta_{\alpha \alpha} \times \Aut Q$ such that for each $f \in \tau$,
\begin{enumerate}

	\item[(C1)] $\sigma \circ a(f,i)(\vec{p},a,\vec{q})  = a(f,i)(\kappa(\vec{p}),\sigma(a),\kappa(\vec{q})) \ \quad \quad \quad \quad \quad \quad \quad \left( \vec{p} \in Q^{i-1}, \vec{q} \in Q^{i}, a \in A(\alpha)/\Delta_{\alpha \alpha} \right)$
	
	$\sigma \left( f^{\Delta}(a,\vec{q}) \right) = f^{\Delta}(\sigma(a),\kappa(\vec{q})) \quad \quad \quad \quad \quad \quad \quad \quad \quad \quad \quad \quad \left( \vec{q} \in Q^{n-1}, a \in A(\alpha)/\Delta_{\alpha \alpha} \right)$
	
	\item[(C2)] $\rho \circ \sigma (x) = \kappa \circ \rho (x)    \, \quad \quad \quad  \quad \quad \quad \quad \quad \quad \quad \quad \quad \quad \quad \quad \quad \quad \quad  \left( x \in A(\alpha)/\Delta_{\alpha \alpha} \right)$
	
	\item[(C3)] $\sigma \left( \begin{bmatrix} x \\ x \end{bmatrix}/\Delta_{\alpha \alpha} \right) = \begin{bmatrix} y \\ y \end{bmatrix}/\Delta_{\alpha \alpha}$ for some $y \in A$ $ \quad \quad \quad \quad \quad \quad \quad \ \left( x \in A \right)$
	
\end{enumerate}
We see that the compatible automorphisms form a subgroup of the direct product $\Aut A(\alpha)/\Delta_{\alpha \alpha} \times \Aut Q$.

For any algebra $A$ and $\alpha \in \Con A$, define the subgroup $\Aut_{\alpha} A = \{ \phi \in \Aut A: \phi(\alpha) \subseteq \alpha \}$ of the full automorphism group of $A$. We now assume the extension $\pi: A \rightarrow Q$ realizes the affine datum $(Q,A^{\alpha,\tau},\ast)$. According to Wires \cite{wires1}, we may assume $\alpha = \ker \pi \in \Con A$. The goal is to define a homomorphism $\psi: \Aut_{\alpha} A \rightarrow C(Q,A^{\alpha,\tau},\ast)$.

For any lifting $l: Q \rightarrow A$ of $\pi$ we have 
\begin{align}\label{eqn:1}
\pi \circ l = \id_{Q} \quad \quad \quad \text{and} \quad \quad \quad \left( l \circ \pi(a), a \right) \in \alpha \quad \quad (a \in A).
\end{align}
For any $\phi \in \Aut_{\alpha} A$, define $\phi_{l}: Q \rightarrow Q$ by $\phi_{l} := \pi \circ \phi \circ l$. If $l': Q \rightarrow A$ is another lifting, then for $q \in Q$ we have $(l(q),l'(q)) \in \alpha \Rightarrow \left( \phi(l(q)),\phi(l'(q)) \right) \in \alpha$ and so $\phi_{l} = \phi_{l'}$; thus, the map $\phi_{l}$ does not depend on the choice of the lifting. If $\phi_{l}(p)=\phi_{l}(q)$, then $(\phi(l(p)),\phi(l(q))) \in \alpha = \ker \pi$ and so $(l(p),l(q)) \in \alpha$; therefore, $p = \pi(l(p))=\pi(l(q))=q$. Also, for $q \in Q$, we see that $\left( (l \circ \pi)(\phi^{-1}(l(q))), \phi^{-1}(l(q)) \right) \in \alpha$ and so  $\left( ( \phi \circ l \circ \pi)(\phi^{-1}(l(q))), l(q) \right) \in \alpha$. This implies $q = \phi_{l} \circ \pi \circ \phi^{-1} \circ l(q)$ and so $\phi_{l}$ is bijective; incidentally, we have also shown $\phi_{l}^{-1} = \pi \circ \phi^{-1} \circ l$. To show $\phi_{l}$ is a homomorphism, take $f \in \tau$, $\vec{q} \in Q^{\ar f}$ and substitute $f^{A}(l(\vec{q}))$ into $a$ in Eq~(\ref{eqn:1}) to get $\left( l(f^{Q}(\vec{q})), f^{A}(l(\vec{q})) \right) = \left( l \circ \pi( f^{A}(l(\vec{q})) ), f^{A}(l(\vec{q})) \right) \in \alpha$. Applying $\phi \in \Aut_{\alpha} A$ to the relation yields $\left( \phi \circ l(f^{Q}(\vec{q})), f^{A}( \phi \circ l (\vec{q})) \right) \in \alpha$ which then implies $\phi_{l}(f^{Q}(\vec{q})) = \pi \circ \phi \circ l(f^{Q}(\vec{q})) = \pi ( f^{A}( \phi \circ l (\vec{q})) ) = f^{Q}(\phi_{l}(\vec{q}))$; altogether, we have shown $\phi_{l} \in \Aut Q$.

For $\phi \in \Aut_{\alpha} A$, define $\widehat{\phi} : A(\alpha)/\Delta_{\alpha \alpha} \rightarrow A(\alpha)/\Delta_{\alpha \alpha}$ by $\widehat{\phi}\left( \begin{bmatrix} a \\ b \end{bmatrix}/\Delta_{\alpha \alpha} \right) := \begin{bmatrix} \phi(a) \\ \phi(b) \end{bmatrix}/\Delta_{\alpha \alpha}$. If $\begin{bmatrix} a \\ b \end{bmatrix} \mathrel \Delta_{\alpha \alpha} \begin{bmatrix} c \\ d \end{bmatrix}$, then $d = m(b,a,c)$ since $A$ realizes affine datum. Applying the automorphism $\phi$ we have $\phi(d) = m(\phi(b),\phi(a),\phi(c))$ which implies $\begin{bmatrix} \phi(a) \\ \phi(b) \end{bmatrix} \mathrel \Delta_{\alpha \alpha} \begin{bmatrix} \phi(c) \\ \phi(d) \end{bmatrix}$; thus, $\widehat{\phi}$ is well-defined. Reversing the preceding calculation and applying $\phi^{-1}$ shows injectivity, and $\begin{bmatrix} a \\ b \end{bmatrix}/\Delta_{\alpha \alpha} = \widehat{\phi} \left( \begin{bmatrix} \phi^{-1}(a) \\ \phi^{-1}(b) \end{bmatrix}/\Delta_{\alpha \alpha} \right)$ establishes surjectivity. Since $\pi: A \rightarrow Q$ realizes the datum, in the semidirect product we have by Wires \cite{wires1} for any $\alpha$-trace $r:A \rightarrow A$ and $f \in \tau$ with $n = \ar f$,
\begin{align*}
\widehat{\phi} \circ F_{f}\left( \begin{bmatrix} r(a_{1}) \\ a_{1} \end{bmatrix}/\Delta_{\alpha \alpha},\ldots,\begin{bmatrix} r(a_{n}) \\ a_{n} \end{bmatrix}/\Delta_{\alpha \alpha}  \right) &= \widehat{\phi} \left( \begin{bmatrix} f(r(a_{1}),\ldots,r(a_{n})) \\ f(a_{1},\ldots,a_{n}) \end{bmatrix}/\Delta_{\alpha \alpha} \right) \\
&= \begin{bmatrix} \phi(f(r(a),\ldots,r(a_{n}))) \\ \phi(f(a_{1},\ldots,a_{n})) \end{bmatrix}/\Delta_{\alpha \alpha} \\
&= \begin{bmatrix} f \left( \phi \circ r(a_{1}),\ldots,\phi \circ r(a_{n}) \right) \\ f(\phi(a_{1}),\ldots,\phi(a_{1})) \end{bmatrix}/\Delta_{\alpha \alpha} \\
&= F_{f} \left( \begin{bmatrix} \phi \circ r(a_{1}) \\ \phi(a_{1}) \end{bmatrix}/\Delta_{\alpha \alpha},\ldots, \begin{bmatrix} \phi \circ r(a_{1}) \\ \phi(a_{n}) \end{bmatrix}/\Delta_{\alpha \alpha}  \right) \\
&= F_{f} \left( \widehat{\phi}\left( \begin{bmatrix} r(a_{1}) \\ a_{1} \end{bmatrix}/\Delta_{\alpha \alpha} \right),\ldots,\widehat{\phi}\left( \begin{bmatrix} r(a_{n}) \\ a_{n} \end{bmatrix}/\Delta_{\alpha \alpha} \right) \right).
\end{align*}
This shows $\widehat{\phi}$ is an endomorphism of the semidirect product; altogether, $\widehat{\phi} \in \Aut A(\alpha)/\Delta_{\alpha \alpha}$.

We now define $\psi(\phi):= (\widehat{\phi},\phi_{l})$. For $\gamma, \phi \in \Aut_{\alpha} A$, clearly $\widehat{\phi \circ \gamma} = \widehat{\phi} \circ \widehat{\gamma}$. For the second-coordinate, write $a=\gamma \circ l(q)$ for $q \in Q$. Then Eq~(\ref{eqn:1}) and $\phi \in \Aut_{\alpha} A$ implies $(\phi\circ l \circ \pi(a), \phi(a)) \in \alpha$ and so $\pi \circ \phi \circ l \circ \pi(a) = \pi \circ \phi(a)$. We then see that
\[
\phi_{l} \circ \gamma_{l}(q) = \pi \circ \phi \circ l \circ \pi \circ \gamma \circ l (q) = \pi \circ \phi \circ l \circ \pi (a) = \pi \circ \phi(a) = \pi \circ \phi \circ \gamma \circ l (q) = (\phi \circ \gamma)_{l}(q);
\]
therefore, $\psi$ is a homomorphism. We can calculate how a pair of automorphisms in the image of $\psi$ interacts with the action in the semidirect product. If we fix  $f \in \tau$ with $\ar f = n > 1$ and $1 \leq i < n$, $\vec{q} =(q_{1},\ldots,q_{i}) \in Q^{i}$ and $\vec{a} = \left( \begin{bmatrix} a_{1} \\ b_{1} \end{bmatrix}/\Delta_{\alpha \alpha},\ldots, \begin{bmatrix} a_{n-i} \\ b_{n-i} \end{bmatrix}/\Delta_{\alpha \alpha} \right) \in \left( A(\alpha)/\Delta_{\alpha \alpha} \right)^{n-i}$, then by realization we have
\begin{align*}
\widehat{\phi} \left( a(f,i)(\vec{q},\vec{a}) \right) &= \widehat{\phi} \left( \begin{bmatrix} f \left( l(q_{1}),\ldots,l(q_{i}),a_{1},\ldots,a_{n-i} \right) \\ f \left( l(q_{1}),\ldots,l(q_{i}),b_{1},\ldots,b_{n-i} \right) \end{bmatrix}/\Delta_{\alpha \alpha} \right) \\
&= \begin{bmatrix} f \left( \phi(l(q_{1})),\ldots,\phi(l(q_{i})),\phi(a_{1}),\ldots,\phi(a_{n-i}) \right) \\ f \left( \phi(l(q_{1})),\ldots,\phi(l(q_{i})),\phi(b_{1}),\ldots,\phi(b_{n-i}) \right)  \end{bmatrix}/\Delta_{\alpha \alpha} = a(f,i) \left( \phi_{l}(\vec{q}), \widehat{\phi}(\vec{a}) \right).
\end{align*}
This shows $\im \psi \leq C(Q,A^{\alpha,\tau},\ast)$.

We now define the map $W_{T}: C(Q,A^{\alpha,\tau},\ast) \rightarrow  H^{2}_{\mathcal U}(Q,A^{\alpha,\tau},\ast)$ where $T$ is a 2-cocycle compatible with the variety $\mathcal U$. For each pair $(\sigma,\kappa) \in \Aut A(\alpha)/\Delta_{\alpha \alpha} \times \Aut Q$, define $T^{(\sigma,\kappa)}$ by the rule
\[
T^{(\sigma,\kappa)}_{f}(\vec{q}) := \sigma \circ T_{f}(\kappa^{-1}(\vec{q}))   \quad \quad \quad \quad \quad \quad  \left( f \in \tau, \vec{q} \in Q^{\ar f} \right).
\]
If $[T]=[S]$ as cohomology classes, then there exists $h: Q \rightarrow A(\alpha)/\Delta_{\alpha \alpha}$ such that
\[
S_{f}(\vec{q}) -_{u} T_{f}(\vec{q}) = f^{A(\alpha)/\Delta_{\alpha \alpha}}(h(\vec{q})) -_{u} h(f^{Q}(\vec{q})) \quad \quad \quad \quad \quad \left( f \in \tau, \vec{q} \in Q^{\ar f} \right) 
\]
where $u = l(f^{Q}(\vec{q}))$. Applying the automorphism $\sigma$ to the above equation and making the substitution $\vec{q} \mapsto \kappa^{-1}(\vec{q})$ we have
\[
\sigma \circ S_{f}(\kappa^{-1}(\vec{q})) -_{v} \sigma \circ T_{f}(\kappa^{-1}(\vec{q})) = f^{A(\alpha)/\Delta_{\alpha \alpha}}(\sigma \circ h \circ \kappa^{-1}(\vec{q})) -_{v} \sigma \circ h \circ \kappa^{-1} (f^{Q}(\vec{q})) 
\]
where $v=\sigma \circ l \circ \kappa^{-1}(f^{Q}(\vec{q}))$.  This shows $S^{(\sigma,\kappa)} \sim T^{(\sigma,\kappa)}$ and so $[T]^{(\sigma,\kappa)} = [T^{(\sigma,\kappa)}]$ is well-defined. If we restrict to the subgroup $C(Q,A^{\alpha,\tau},\ast)$, then the action preserves $\mathcal U$-compatibility and so defines a group action of the compatible automorphism pairs on $2^{\mathrm{nd}}$-cohomology.

To see this, take $[T] \in H^{2}_{\mathcal U}(Q,A^{\alpha,\tau},\ast)$ which means $T$ is a 2-cocycle of the datum which is compatible with the identities of $\mathcal U$. According to Wires \cite{wires1}, this means that for $t=s \in \Id \mathcal U$, the 2-cocycle $T$ satisfies for every appropriate assignment $\epsilon : \mathrm{var} \, t \cup \mathrm{var} \, s \rightarrow Q$ an equation
\begin{align}\label{eqn:2}
t^{\partial,T}(\epsilon(\mathrm{var} \, t)) = s^{\partial,T} (\epsilon(\mathrm{var} \, s))
\end{align}
For our purposes, the important thing to note is that $t^{\partial,T}(\epsilon(\mathrm{var} \, t))$ is a sum over $+_{u}$ where $u = l \left( t^{Q}(\rho(\vec{a})) \right)$. The summands are recursively defined from basic expressions of the form
\begin{align}\label{eqn:3}
a(f,i)\left(\vec{p},\omega,\vec{q} \right) \ , \quad \quad \quad g^{\Delta}(\omega,\delta \circ l(\vec{q})) \quad \quad \text{ and } \quad \quad T_{h}(\vec{p}) \quad \quad \quad \quad \quad \quad (f,g,h \in \tau)
\end{align}
by substituting each other at $\omega$ in a certain specified manner according to the composition tree of the term $t$ with the result that $\omega$ does not appear. This means that in the tree describing the resulting composition, $T_{h}(\vec{p})$ for some fundamental symbol $h \in \tau$ would appear at each leaf. Similarly for $s^{\partial,T} (\epsilon(\mathrm{var} \, s))$ and $+_{v}$ with $v = l \left( s^{Q}(\rho(\vec{b})) \right)$. In this way, we can think of Eq~(\ref{eqn:2}) as an equation satisfied by the 2-cocycle $T$.

The tuples $\vec{p}$ and $\vec{q}$ which appears in the expressions in Eq~(\ref{eqn:3}) are determined by starting with the assignment $\epsilon(\mathrm{var} \, t)$ and propagating through the composition tree of $t$ starting from the variables; consequently, $\vec{p}$ and $\vec{q}$ are the result of evaluations of different subterms calculated entirely in $Q$ starting from the assignment $\epsilon(\mathrm{var} \, t)$ of the variables. It then follows that any substitution $\epsilon(\mathrm{var} \, t) \rightarrow Q$ induces in a consistent manner simultaneous substitutions in all expressions in Eq~(\ref{eqn:3}) which were composed to form the expression $t^{\partial,T}( \epsilon(\mathrm{var} \, t))$. Given $(\sigma,\kappa) \in C(Q,A^{\alpha,\tau},\ast)$, applying $\sigma$ to the expressions in Eq~(\ref{eqn:3}), using condition (C1) and then making the substitution $\mathrm{var} \, t \mapsto \kappa^{-1}(\epsilon(\mathrm{var} \, t))$ we have the expressions 
\begin{align*}
a(f,i)\left(\vec{p}, \sigma(\omega), \vec{q} \right) \ , \quad \quad \quad g^{\Delta}(\sigma(\omega),\delta \circ l(\vec{q}))  \quad \quad \text{ and } \quad \quad \sigma \circ T_{h}(\kappa^{-1}(\vec{p})) \quad \quad \quad \quad \quad \quad (f,g,h \in \tau)
\end{align*}
which compose to give the same result as applying $\sigma$ to the expression $t^{\partial,T}(\epsilon(\mathrm{var} \, t))$ and making the substitution $\epsilon(\mathrm{var} \, t) \mapsto \kappa^{-1}(\epsilon(\mathrm{var} \, t))$. A similar discussion applies to the right-hand side of Eq~(\ref{eqn:2}). Altogether, this shows that $T^{(\sigma,\kappa)}$ satisfies the same 2-cocycle identity determined by $t=s \in \Id \mathcal U$; therefore, $[T]^{(\sigma,\kappa)} \in H^{2}_{\mathcal U}(Q,A^{\alpha,\tau},\ast)$.

We now define the map 
\[
W : C(Q,A^{\alpha,\tau},\ast) \times H^{2}_{\mathcal U}(Q,A^{\alpha,\tau},\ast) \rightarrow  H^{2}_{\mathcal U}(Q,A^{\alpha,\tau},\ast) 
\]
by $W \left( (\sigma,\kappa),[T] \right) : = [T - T^{(\sigma,\kappa)}]$. It is easy to see that it is a homomorphism in the second-coordinate. Since there is a group action of $C(Q,A^{\alpha,\tau},\ast)$ on $H^{2}_{\mathcal U}(Q,A^{\alpha,\tau},\ast)$, the restriction $W_{T} := W(-,[T])$ is a principal derivation of the corresponding group datum, and is called the \emph{Wells derivation}. As a derivation, we have $\ker W_{T} \leq C(Q,A^{\alpha,\tau},\ast)$ but not generally as a normal subgroup. Observe that $\ker W_{T} \cap \ker W_{T'} \leq \ker W_{T + T'}$; therefore, it follows that $\ker W = \{[T] : \ker W_{T} = C(Q,A^{\alpha,\tau},\ast) \}$ is a subgroup of $H^{2}_{\mathcal U} \left( Q, A^{\alpha,\tau},\ast \right)$.

\vspace{0.5cm}

%%%%%%%%%%%%%%%%%%%%%%%%%%%%%%%%%%%%%%%%%%%%%%%%%%%%%%%%%%%%%%%%%%%%%%%%%%%%%%%%%%%%%%%%%%%%%%%%%%%%%%%%%%%%%%%%%%%%%%%%%%%%%%%

\section{Demonstrations for Theorem~\ref{thm:2} and Theorem~\ref{thm:3}}
\vspace{0.3cm}

In this section, we give the proofs of Theorem~\ref{thm:2} and Theorem~\ref{thm:3}.

\begin{proof} (of Theorem \ref{thm:2})
Fix an extension $\pi:A \rightarrow Q$ realizing affine datum $(Q,A^{\alpha,\tau},\ast)$ with $A \in \mathcal U$ and let $[T] \in H^{2}_{\mathcal U}(Q,A^{\alpha,\tau},\ast)$ by the 2-cocycle compatible with $\mathcal U$ associated to the extension. Fix a lifting $l: Q \rightarrow A$ for $\pi$. We see that $\phi \in \ker \psi$ if and only if the two conditions 
\begin{align}\label{eqn:5}
\widehat{\phi}=\id_{A(\alpha)/\Delta_{\alpha \alpha}} \quad \quad \quad \text{and} \quad \quad \quad \pi \circ \phi \circ l = \phi_{l} = \id_{Q}
\end{align}
on $\phi$ hold. Assume $\phi \in \ker \psi$. For any $a \in A$, $(l \circ \pi(a),a) \in \alpha$ implies $(\phi \circ l \circ \pi (a),\phi(a)) \in \alpha$ which yields $\pi \circ \phi (a) = \pi \circ \phi \circ l \circ \pi (a) = \pi (a)$ by the second condition on $\phi$; that is,
\begin{align}\label{eqn:6}
\pi \circ \phi = \pi.
\end{align}
Now according to Wires \cite{wires1}, every element in the universe $A(\alpha)/\Delta_{\alpha \alpha}$ is uniquely represented in the form $\begin{bmatrix} r(a) \\ a \end{bmatrix}/\Delta_{\alpha \alpha}$ for any $\alpha$-trace $r : A \rightarrow A$. Since $(Q,A^{\alpha,\tau},\ast)$ is affine datum, the first condition in Eq~(\ref{eqn:5}) yields $\begin{bmatrix} \phi(r(a)) \\ \phi(a) \end{bmatrix}/\Delta_{\alpha \alpha} = \widehat{\phi} \left( \begin{bmatrix} r(a) \\ a \end{bmatrix}/\Delta_{\alpha \alpha} \right) = \begin{bmatrix} r(a) \\ a \end{bmatrix}/\Delta_{\alpha \alpha}$ which implies 
\begin{align}\label{eqn:7}
\phi(a) = m(\phi(r(a)),r(a),a) \quad \quad \quad  \text{for any} \ \alpha-\text{trace} \ r: A \rightarrow A. 
\end{align}
According to Wires \cite{wires1}, Eq~(\ref{eqn:6}) and Eq~(\ref{eqn:7}) are precisely the conditions for $\phi \in \mathrm{Stab}(\pi: A \rightarrow Q)$. Working the above argument in reverse shows Eq~(\ref{eqn:6}) and Eq~(\ref{eqn:7}) implies Eq~(\ref{eqn:5}); altogether, $\mathrm{Stab}(\pi: A \rightarrow Q) = \ker \psi$. Then by Wires \cite{wires1}, this yields an embedding $\mathrm{Der}(Q,A^{\alpha,\tau},\ast) \approx \mathrm{Stab}(\pi: A \rightarrow Q) = \ker \psi \leq \Aut_{\alpha} A$.

We now verify exactness at $C(Q,A^{\alpha,\tau},\ast)$. Given $\phi \in \Aut_{\alpha} A$ and $q \in Q$, observe $\pi \circ \phi \circ l \circ \phi_{l}^{-1}(q) = \phi_{l} \circ \phi_{l}^{-1}(q) = q$. This implies $\phi \circ l \circ \phi_{l}^{-1} : Q \rightarrow A$ is another lifting for $\pi$. Since $\pi:A \rightarrow Q$ realizes the datum, we see that for $f \in \tau$ and $\vec{q} \in Q^{\ar f}$,
\begin{align*}
 T^{(\widehat{\phi},\phi_{l})}(\vec{q}) = \widehat{\phi} \circ T_{f}(\phi_{l}^{-1}(\vec{q})) = \widehat{\phi} \left( \begin{bmatrix}  l \left( f^{Q}(\phi_{l}^{-1}(\vec{q})) \right) \\ f^{A} \left( l \circ \phi_{l}^{-1}(\vec{q}) \right)  \end{bmatrix}/\Delta_{\alpha \alpha} \right) = \begin{bmatrix}  \phi \circ l \circ \phi_{l}^{-1}\left( f^{Q}(\vec{q}) \right) \\ f^{A} \left( \phi \circ l \circ \phi_{l}^{-1}(\vec{q}) \right) \end{bmatrix}/\Delta_{\alpha \alpha}.
\end{align*}
This implies $T^{(\widehat{\phi},\phi_{l})}$ is the 2-cocycle defined by the lifting $\phi \circ l \circ \phi_{l}^{-1}$. According to Wires \cite{wires1}, we see that $T \sim T^{(\widehat{\phi},\phi_{l})}$ and so $W_{T} \circ \psi(\phi) = [T - T^{(\widehat{\phi},\phi_{l})}] = 0$.

Now take $(\sigma,\kappa) \in C(Q,A^{\alpha,\tau},\ast)$ and suppose $W_{T}(\sigma,\kappa) = 0$ which implies $T \sim T^{(\sigma,\kappa)}$. So there exists $h:Q \rightarrow A(\alpha)/\Delta_{\alpha \alpha}$ such that
\begin{align}\label{eqn:12}
\sigma \circ T_{f}(\kappa^{-1}(\vec{q})) -_{w} T_{f}(\vec{q}) = f^{A(\alpha)/\Delta_{\alpha \alpha}}(h(\vec{q})) -_{w} h(f^{Q}(\vec{q})) \quad \quad \quad \quad \quad \quad \left( f \in \tau, \vec{q} \in Q^{\ar f} \right)
\end{align}
where $w = l(f^{Q}(\vec{q}))$. We use this to define $\phi \in \Aut_{\alpha} A$ in the following manner. Fix the $\alpha$-trace $r = l \circ \pi$. By Wires \cite{wires1}, there is an isomorphism
$\gamma : A \rightarrow A_{T}(Q,A^{\alpha,\tau},\ast)$ given by $\gamma(x) = \begin{bmatrix} r(x) \\ x \end{bmatrix}/\Delta_{\alpha \alpha}$ where $A_{T}(Q,A^{\alpha,\tau},\ast)$ is the extension directly reconstructed from the datum. It will be useful to record  
\begin{align}
\rho \circ \gamma = \pi     \quad \quad \quad  \quad \quad \quad    \rho \circ h = \id_{Q}
\end{align}
for the following calculations. Define $\bar{\phi} : A_{T}(Q,A^{\alpha,\tau},\ast) \rightarrow A_{T}(Q,A^{\alpha,\tau},\ast)$ by the rule
\begin{align}\label{eqn:10}
\bar{\phi} \left( \begin{bmatrix} r(x) \\ x \end{bmatrix}/\Delta_{\alpha \alpha} \right) := \sigma \left( \begin{bmatrix} r(x) \\ x \end{bmatrix}/\Delta_{\alpha \alpha} \right) +_{u} h \left( \kappa \circ \rho \left( \begin{bmatrix} r(x) \\ x \end{bmatrix}/\Delta_{\alpha \alpha} \right) \right)
\end{align}
where $u = l \circ \rho \circ \sigma \left( \begin{bmatrix} r(x) \\ x \end{bmatrix}/\Delta_{\alpha \alpha} \right)$. Then $\phi := \gamma^{-1} \circ \bar{\phi} \circ \gamma \in \Aut A_{\alpha}$ will be the automorphism we seek such that $\psi(\phi) = (\sigma,\kappa)$.

We first show $\phi_{l} = \kappa$. Define $\sigma' : A \rightarrow A$ by $\sigma' = \gamma^{-1} \circ \sigma \circ \gamma$. Then $u = l \circ \rho \circ \sigma \circ \gamma = l \circ \rho \circ \gamma \circ \gamma^{-1} \circ \sigma \circ \gamma = r \circ \gamma^{-1} \circ \sigma \circ \gamma = r \circ \sigma'$. This implies we can write $\sigma \left( \begin{bmatrix} r(x) \\ x \end{bmatrix}/\Delta_{\alpha \alpha} \right) = \sigma \circ \gamma(x) = \begin{bmatrix} u \\ \sigma'(x) \end{bmatrix}/\Delta_{\alpha \alpha}$. We note that $r(u)=u$ and thus, $\gamma (u) = \begin{bmatrix} r(u) \\ u\end{bmatrix}/\Delta_{\alpha \alpha} = \begin{bmatrix} u \\ u\end{bmatrix}/\Delta_{\alpha \alpha}$. We can also see that $\sigma \left( \begin{bmatrix} r(x) \\ x \end{bmatrix}/\Delta_{\alpha \alpha} \right) = \begin{bmatrix} u \\ \sigma'(x) \end{bmatrix}/\Delta_{\alpha \alpha} \ \mathrel{\hat{\alpha}/\Delta_{\alpha \alpha}} \ \begin{bmatrix} u \\ u \end{bmatrix}/\Delta_{\alpha \alpha}$ by definition of $u \in A$; thus, $\rho \circ \sigma \circ \gamma(x) = \rho \circ \gamma (u)$. Recall that since $(\sigma,\kappa)$ are compatible, $\rho \circ \sigma \circ \gamma (x) = \kappa \circ \rho \circ \gamma(x)$. Then by using the idempotence of the term $m$ in Q, we have
\begin{align*}
\pi \circ \phi (x) = \pi \circ \gamma^{-1} \circ \bar{\phi} \circ \gamma (x) &= \pi \circ \gamma^{-1} \left( \sigma \circ \gamma(x) +_{u} h \left( \kappa \circ \rho \circ \gamma(x) \right)  \right) \\ 
&= \rho \circ m^{A(\alpha)/\Delta_{\alpha \alpha}} \left( \sigma \circ \gamma(x), \gamma(u), h \left( \kappa \circ \rho \circ \gamma(x) \right) \right) \\
&= m^{Q}\left( \rho \circ \sigma \circ \gamma(x), \rho \circ \gamma(u), \kappa \circ \rho \circ \gamma(x)  \right) \\
&= m^{Q}\left( \rho \circ \sigma \circ \gamma(x), \rho \circ \gamma(u), \rho \circ \sigma \circ \gamma(x)  \right) \\
&= \rho \circ \sigma \circ \gamma(x) = \kappa \circ \rho \circ \gamma (x) = \kappa \circ \pi (x);
\end{align*}
that is,
\begin{align}\label{eqn:9}
\pi \circ \phi = \kappa \circ \pi.
\end{align}
Then $\phi_{l} = \pi \circ \phi \circ l = \kappa \circ \pi \circ l = \kappa \circ \id_{Q} = \kappa$ as desired.

Let us now show $\widehat{\phi} = \sigma$. If we define $h': Q \rightarrow A$ by $h' = \gamma^{-1} \circ h$, then we can write $h(\pi(x)) = \begin{bmatrix} r \circ h(\pi(x)) \\ h'(\pi(x)) \end{bmatrix}/\Delta_{\alpha \alpha}$. Since $l \circ \rho \circ h(\kappa \circ \pi(x) ) = l \circ \rho \circ h(\kappa \circ \rho \circ \gamma(x) ) = l \circ \kappa \circ \rho \circ \gamma(x) = l \circ \rho \circ \sigma \circ \gamma (x)=u$, we see that $h(\kappa \circ \pi(x)) = \begin{bmatrix} u \\ h'(\kappa \circ \pi(x)) \end{bmatrix}/\Delta_{\alpha \alpha}$. At this point, we should note $h'(\kappa \circ \pi(x)) = h'(\kappa \circ \pi(r(x))$. We can also give a representation of $\bar{\phi}$ by
\begin{align}\label{eqn:8}
\bar{\phi} \left( \begin{bmatrix} r(x) \\ x \end{bmatrix}/\delta_{\alpha \alpha} \right) = \begin{bmatrix} u \\ \sigma'(x)  \end{bmatrix}/\Delta_{\alpha \alpha} +_{u} \begin{bmatrix} u \\ h'(\kappa \circ \pi(x))  \end{bmatrix}/\Delta_{\alpha \alpha} = \begin{bmatrix} u \\ m^{A} \left( \sigma'(x), u, h'(\kappa \circ \pi(x)) \right)  \end{bmatrix}/\Delta_{\alpha \alpha}
\end{align}
which yields $\phi(x) = m^{A} \left( \sigma'(x), u, h'(\kappa \circ \pi(x)) \right)$. Since $(\sigma,\kappa)$ are compatible, we have $\rho \circ \sigma \circ \gamma(x) = \kappa \circ \rho \circ \gamma (x)=  \kappa \circ \rho \circ \gamma(r(x))  = \rho \circ \sigma \circ \gamma(r(x))$. This implies $u = l \circ \rho \circ \sigma \circ \gamma(x) = l \circ \rho \circ \sigma \circ \gamma(r(x)) = l \circ \pi \circ \gamma^{-1} \circ \sigma \circ \gamma(r(x)) = r \circ \sigma'(r(x))$; therefore, since $(\sigma,\kappa)$ are compatible, it must be that $\sigma \left( \begin{bmatrix} r(x) \\ r(x) \end{bmatrix}/\Delta_{\alpha \alpha} \right) = \begin{bmatrix} r \circ \sigma'(r(x)) \\ \sigma'(r(x)) \end{bmatrix}/\Delta_{\alpha \alpha} = \begin{bmatrix} u \\ \sigma'(r(x)) \end{bmatrix}/\Delta_{\alpha \alpha}$ is a $\Delta_{\alpha \alpha}$-class of a diagonal. Since this element is unique in any $\hat{\alpha}/\Delta_{\alpha \alpha}$-class because $\alpha$ is abelian, it must be that $\sigma'(r(x)) = u$. We can now calculate
\begin{align*}
\widehat{\phi} \left( \begin{bmatrix} r(x) \\ x \end{bmatrix}/\Delta_{\alpha \alpha} \right) = \begin{bmatrix} \phi(r(x)) \\ \phi(x) \end{bmatrix}/\Delta_{\alpha \alpha}  &= \begin{bmatrix} m^{A} \left( \sigma'(r(x)), u, h'(\kappa \circ \pi(r(x))) \right) \\ m^{A} \left( \sigma'(x), u, h'(\kappa \circ \pi(x)) \right) \end{bmatrix}/\Delta_{\alpha \alpha} \\
&= m^{A(\alpha)/\Delta_{\alpha \alpha}} \left( \begin{bmatrix} \sigma'(r(x)) \\ \sigma'(x) \end{bmatrix}/\Delta_{\alpha \alpha}, \begin{bmatrix} u \\ u \end{bmatrix}/\Delta_{\alpha \alpha}, \begin{bmatrix} u \\ u \end{bmatrix}/\Delta_{\alpha \alpha}  \right) \\
&= \begin{bmatrix} \sigma'(r(x)) \\ \sigma'(x) \end{bmatrix}/\Delta_{\alpha \alpha} = \begin{bmatrix} u \\ \sigma'(x) \end{bmatrix}/\Delta_{\alpha \alpha} = \sigma \left( \begin{bmatrix} r(x) \\ x \end{bmatrix}/\Delta_{\alpha \alpha} \right)
\end{align*}
since $m$ is Mal'cev on $\hat{\alpha}/\Delta_{\alpha \alpha}$-classes and so $\widehat{\phi} = \sigma$; altogether, we have shown $\psi(\phi) = (\sigma,\kappa)$.

Let us now show $\phi$ preserves $\alpha$ and is bijective. If $(a,b) \in \alpha$, then by Eq~(\ref{eqn:9}) we have $\pi \circ \phi(a) = \kappa \circ \pi(a) = \kappa \circ \pi(b) = \pi \circ \phi(b)$ and so $(\phi(a),\phi(b)) \in \alpha$; thus, $\phi(\alpha) \subseteq \alpha$.

We show $\phi$ is injective. If $\phi(a) = \phi(b)$, then $\kappa \circ \pi(a) = \pi \circ \phi(a) = \pi \circ \phi(b) = \kappa \circ \pi(b)$ which implies $\pi(a) = \pi(b)$ since $\kappa \in \Aut Q$; therefore, $r(a)=r(b)$. We also see that $h(\kappa \circ \rho \circ \gamma(a)) = h( \kappa \circ \pi(a)) = h( \kappa \circ \pi(b)) = h(\kappa \circ \rho \circ \gamma(b))$ and $l \circ \rho \circ \sigma \circ \gamma(a) = l \circ \kappa \circ \pi(a) = l \circ \kappa \circ \pi(b) = l \circ \rho \circ \sigma \circ \gamma(b)$. Using these facts in Eq~(\ref{eqn:10}) yields $\sigma \left( \begin{bmatrix} r(a) \\ a \end{bmatrix}/\Delta_{\alpha \alpha} \right) = \sigma \left( \begin{bmatrix} r(b) \\ b \end{bmatrix}/\Delta_{\alpha \alpha} \right)$ and so $\begin{bmatrix} r(a) \\ a \end{bmatrix}/\Delta_{\alpha \alpha} = \begin{bmatrix} r(b) \\ b \end{bmatrix}/\Delta_{\alpha \alpha}$ since $\sigma$ is an automorphism. Then $r(a)=r(b)$ implies $a=b$.

To show $\phi$ is surjective, it suffices to show $\bar{\phi}$ is. Given $a \in A$, set $b=\sigma^{-1} \left( \begin{bmatrix} r(a) \\ a \end{bmatrix}/\Delta_{\alpha \alpha} -_{r(a)} h(\pi(a)) \right)$. Observe that 
\begin{align}\label{eqn:11}
h \left( k \circ \rho \circ \sigma^{-1} \left( \begin{bmatrix} r(a) \\ a \end{bmatrix}/\Delta_{\alpha \alpha} -_{r(a)} h(\pi(a)) \right) \right) = h \left( \rho \left( \begin{bmatrix} r(a) \\ a \end{bmatrix}/\Delta_{\alpha \alpha} -_{r(a)} h(\pi(a)) \right) \right) = h(\pi(a))
\end{align}
by idempotence of $m^{Q}$; in particular, $l \circ \rho \circ \sigma (b) = r(a)$. Then using Eq~(\ref{eqn:11}) we have 
\begin{align*}
\bar{\phi}(b) = \sigma(b) +_{r(a)} h(\pi(a)) = \begin{bmatrix} r(a) \\ a \end{bmatrix}/\Delta_{\alpha \alpha} -_{r(a)} h(\pi(a)) +_{r(a)} h(\pi(a)) = \begin{bmatrix} r(a) \\ a \end{bmatrix}/\Delta_{\alpha \alpha}; 
\end{align*}
therefore; $\hat{\phi}$ is surjective.

The last task is to show $\bar{\phi}$ is a homomorphism which will imply $\phi$ is, as well. In Eq~(\ref{eqn:12}), we make the substitution $\vec{q} \mapsto \kappa(\vec{q})$ and rewrite to conclude
\begin{align}\label{eqn:13}
\sigma \circ T_{f}(\vec{q}) +_{v} h \left( f^{Q}(\kappa(\vec{q})) \right) = f^{A(\alpha)/\Delta_{\alpha \alpha}} \left( h(\kappa(\vec{q})) \right) + _{v} T_{f}(\kappa(\vec{q}))
\end{align}
where $v=l \left( f^{Q}(\kappa(\vec{q})) \right)$. Let us observe that $l \circ \rho \circ \sigma \left( \begin{bmatrix} r(f(\vec{x})) \\ f(\vec{x}) \end{bmatrix}/\Delta_{\alpha \alpha} \right) = l \circ \kappa \circ \rho \left( \begin{bmatrix} r(f(\vec{x})) \\ f(\vec{x}) \end{bmatrix}/\Delta_{\alpha \alpha} \right) = l \left( f^{Q}(\kappa(\vec{q})) \right) = v$ where we have written $\pi(\vec{x}) = \vec{q}$. Then by realization of the datum, we have
\begin{align*}
\bar{\phi} \left( F_{f} \left( \begin{bmatrix} r(\vec{x}) \\ \vec{x} \end{bmatrix}/\Delta_{\alpha \alpha} \right) \right) &= \bar{\phi} \left( \begin{bmatrix} r(f(\vec{x})) \\ f(\vec{x}) \end{bmatrix}/\Delta_{\alpha \alpha} \right) \\
&= \sigma \left( \begin{bmatrix} r(f(\vec{x})) \\ f(\vec{x}) \end{bmatrix}/\Delta_{\alpha \alpha} \right) +_{v} h \left( \kappa \circ \rho \left( \begin{bmatrix} r(f(\vec{x})) \\ f(\vec{x}) \end{bmatrix}/\Delta_{\alpha \alpha} \right) \right) \\
&= \sigma \circ f^{A(\alpha)/\Delta_{\alpha \alpha}} \left( \begin{bmatrix} r(\vec{x}) \\ \vec{x} \end{bmatrix}/\Delta_{\alpha \alpha} \right)  +_{v} \sigma \circ T_{f}(\vec{q}) +_{v} h \left( \kappa(f^{Q}(\vec{q})) \right) \\
&= f^{A(\alpha)/\Delta_{\alpha \alpha}} \left( \sigma \left( \begin{bmatrix} r(\vec{x}) \\ \vec{x} \end{bmatrix}/\Delta_{\alpha \alpha} \right) \right)  +_{v} f^{A(\alpha)/\Delta_{\alpha \alpha}} \left( h(\kappa(\vec{q})) \right) +_{v} T_{f}\left( \kappa(\vec{q}) \right)  \\
&= f^{A(\alpha)/\Delta_{\alpha \alpha}} \left( \bar{\phi}(\vec{x}) \right) +_{v} T_{f}(\kappa(\vec{q})) \\
&= F_{f} \left( \bar{\phi}(\vec{x}) \right).
\end{align*}
The demonstration is now complete.
\end{proof}

\begin{proof} (of Theorem \ref{thm:3})
Given $[T] \in H^{2}_{\mathcal U}(Q,A^{\alpha,\tau},\ast)$, there is a realization $\pi: A_{T}(Q,A^{\alpha,\tau},\ast) \rightarrow Q$ with $A_{T}(Q,A^{\alpha,\tau},\ast) \in \mathcal U$. For each $(\sigma,\kappa) \in \ker W_{T}$, there is a function $h_{(\sigma,\kappa)} : Q \rightarrow A(\alpha)/\Delta_{\alpha \alpha}$ such that
\begin{align*}
T^{(\sigma,\kappa)}_{f}(\vec{q}) -_{v} T_{f}(\vec{q}) = f^{A(\alpha)/\Delta_{\alpha \alpha}} \left( h_{(\sigma,\kappa)}(\vec{q}) \right) -_{v} h_{(\sigma,\kappa)} \left( f^{Q}(\vec{q}) \right) \quad \quad \quad \quad \quad \left( f \in \tau, \vec{q} \in Q^{\ar f} \right)
\end{align*}
where $v = l(f^{Q}(\vec{q}))$ for a lifting $l$ of $\pi$. We follow Eq~(\ref{eqn:10}) and define
\begin{align}
\hat{l}_{T}(\sigma,\kappa) \left( \begin{bmatrix} r(x) \\ x \end{bmatrix}/\Delta_{\alpha \alpha} \right) := \sigma \left( \begin{bmatrix} r(x) \\ x \end{bmatrix}/\Delta_{\alpha \alpha}\right) +_{u} h_{(\sigma,\kappa)} \left( \kappa \circ \pi \left( \begin{bmatrix} r(x) \\ x \end{bmatrix}/\Delta_{\alpha \alpha}  \right)  \right)
\end{align}
for $(\sigma,\kappa) \in \ker W_{T}$ where $u=l \circ \pi \circ \sigma \left( \begin{bmatrix} r(x) \\ x \end{bmatrix}/\Delta_{\alpha \alpha}  \right)$ and $\alpha$-trace $r = l \circ \pi$. As in the last part of the proof of Theorem~\ref{thm:2}, we can see that $\hat{l}_{T} : \ker W_{T} \rightarrow \Aut_{\hat{\alpha}} A_{T}(Q,A^{\alpha,\tau},\ast)$ and is a lifting for $\psi : A_{T}(Q,A^{\alpha,\tau},\ast) \rightarrow C(Q,A^{\alpha,\tau},\ast)$. For $d \in \mathrm{Der}(Q,A^{\alpha,\tau},\ast)$, we let $\overline{d}$ denote the corresponding stabilizing automorphism so that 
\begin{align}
\overline{d} \left( \begin{bmatrix} r(x) \\ x \end{bmatrix}/\Delta_{\alpha \alpha} \right) = \begin{bmatrix} r(x) \\ x \end{bmatrix}/\Delta_{\alpha \alpha} +_{r(x)} d \circ \pi \left( \begin{bmatrix} r(x) \\ x \end{bmatrix}/\Delta_{\alpha \alpha} \right)  .
\end{align}
Now define the 2-cocycle $S$ determined by the lifting $\hat{l}$ for the extension $\psi$ so that $\overline{S \left( ( x,y), (u,v) \right)}:= \hat{l}_{T}(x,y) \circ \hat{l}_{T}(u,v) \circ \hat{l}_{T}(xu,yv)^{-1}$. Theorem~\ref{thm:2} and group cohomology yields the isomorphism 
\begin{align*}
\Aut_{\hat{\alpha}} A(\alpha,\ast,T) \approx \mathrm{Der}(Q,A^{\alpha,\tau},\ast) \rtimes_{(S,\phi)} \ker W_{T}
\end{align*}
where the action $\phi : \ker W_{T} \rightarrow \Aut \mathrm{Der}(Q,A^{\alpha,\tau},\ast)$ is induced by the lifting $\hat{l}$ of $\psi$ in the standard manner. The task now is to show $[S]=0$ in cohomology which is done by first evaluating the 2-cocycle $S$.

Let us first note that
\[
\hat{l}_{T}(\gamma \sigma,\beta \kappa)^{-1} \left( \begin{bmatrix} r(x) \\ x \end{bmatrix}/\Delta_{\alpha \alpha} \right) = \sigma^{-1} \circ \gamma^{-1} \left( \left( \begin{bmatrix} r(x) \\ x \end{bmatrix}/\Delta_{\alpha \alpha} \right) -_{r(x)} h_{(\gamma \sigma,\beta \kappa)} \left( \pi \left( \begin{bmatrix} r(x) \\ x \end{bmatrix}/\Delta_{\alpha \alpha} \right)  \right) \right).
\]
Using compatibility of $(\gamma,\beta)$, we can simplify
\begin{align*}
u &= l \circ \pi \circ \sigma \left( \hat{l}_{T}(\gamma \sigma,\beta \kappa)^{-1} \left( \begin{bmatrix} r(x) \\ x \end{bmatrix}/\Delta_{\alpha \alpha} \right)  \right) \\
&= l \circ \pi \circ \gamma^{-1} \left( \left( \begin{bmatrix} r(x) \\ x \end{bmatrix}/\Delta_{\alpha \alpha} \right) -_{r(x)} h_{(\gamma \sigma,\beta \kappa)} \left( \pi \left( \begin{bmatrix} r(x) \\ x \end{bmatrix}/\Delta_{\alpha \alpha} \right)  \right) \right) \\
&= l \circ \beta^{-1} \circ \pi \left( \left( \begin{bmatrix} r(x) \\ x \end{bmatrix}/\Delta_{\alpha \alpha} \right) -_{r(x)} h_{(\gamma \sigma,\beta \kappa)} \left( \pi \left( \begin{bmatrix} r(x) \\ x \end{bmatrix}/\Delta_{\alpha \alpha} \right)  \right) \right) \\
&= l \circ \beta^{-1} \circ \pi \left( \begin{bmatrix} r(x) \\ x \end{bmatrix}/\Delta_{\alpha \alpha} \right)
\end{align*}
and
\begin{align*}
h_{(\sigma,\kappa)} \left( \kappa \circ \pi \left(  \hat{l}_{T}(\gamma \sigma,\beta \kappa)^{-1} \left( \begin{bmatrix} r(x) \\ x \end{bmatrix}/\Delta_{\alpha \alpha} \right) \right) \right) &= h_{(\sigma,\kappa)} \left( \pi \circ \sigma \left(  \hat{l}_{T}(\gamma \sigma,\beta \kappa)^{-1} \left( \begin{bmatrix} r(x) \\ x \end{bmatrix}/\Delta_{\alpha \alpha} \right) \right) \right) \\
&= h_{(\sigma,\kappa)} \left( \beta^{-1} \circ \pi \left( \begin{bmatrix} r(x) \\ x \end{bmatrix}/\Delta_{\alpha \alpha} \right) \right).
\end{align*}
Then
\begin{align*}
\hat{l}_{T}(\sigma,\kappa) \circ \hat{l}_{T}(\gamma \sigma,\beta \kappa)^{-1} \left( \begin{bmatrix} r(x) \\ x \end{bmatrix}/\Delta_{\alpha \alpha} \right) &= \gamma^{-1} \left( \left( \begin{bmatrix} r(x) \\ x \end{bmatrix}/\Delta_{\alpha \alpha} \right) -_{r(x)} h_{(\gamma \sigma,\beta \kappa)} \left( \pi \left( \begin{bmatrix} r(x) \\ x \end{bmatrix}/\Delta_{\alpha \alpha} \right)  \right) \right) \\ 
&\quad +_{u} h_{(\sigma,\kappa)} \left( \beta^{-1} \circ \pi \left( \begin{bmatrix} r(x) \\ x \end{bmatrix}/\Delta_{\alpha \alpha} \right) \right).
\end{align*}
Again, we can calculate in a similar manner
\begin{align*}
w &= l \circ \pi \circ \gamma \left( \hat{l}_{T}(\sigma,\kappa) \circ \hat{l}_{T}(\gamma \sigma,\beta \kappa)^{-1} \left( \begin{bmatrix} r(x) \\ x \end{bmatrix}/\Delta_{\alpha \alpha} \right) \right) \displaybreak[0]\\
&= l \circ \beta \circ \pi \left( \hat{l}_{T}(\sigma,\kappa) \circ \hat{l}_{T}(\gamma \sigma,\beta \kappa)^{-1} \left( \begin{bmatrix} r(x) \\ x \end{bmatrix}/\Delta_{\alpha \alpha} \right) \right)  \displaybreak[0]\\
&= l \circ \beta \left( \beta^{-1} \circ \pi \left( \begin{bmatrix} r(x) \\ x \end{bmatrix}/\Delta_{\alpha \alpha} \right) \right) = r(x) \\
\end{align*}
and
\begin{align*}
z = l \circ \pi \circ \gamma \left( \begin{bmatrix} u \\ u \end{bmatrix}/\Delta_{\alpha \alpha} \right) = l \circ \beta \circ \pi \left(  \begin{bmatrix} u \\ u \end{bmatrix}/\Delta_{\alpha \alpha} \right) = l \circ \beta^{-1} \circ \beta \circ \pi \left( \begin{bmatrix} r(x) \\ x \end{bmatrix}/\Delta_{\alpha \alpha} \right) = r(x).
\end{align*}
Altogether,  
\begin{align*}
\overline{S \left( (\gamma,\beta), (\sigma,\kappa) \right)} \left( \begin{bmatrix} r(x) \\ x \end{bmatrix}/\Delta_{\alpha \alpha} \right) &= \gamma \left( \hat{l}_{T}(\sigma,\kappa) \circ \hat{l}_{T}(\gamma \sigma,\beta \kappa)^{-1} \left( \begin{bmatrix} r(x) \\ x \end{bmatrix}/\Delta_{\alpha \alpha} \right)  \right) +_{w} \displaybreak[0]\\
&\, h_{(\gamma,\beta)}\left( \beta \circ \pi \left( \hat{l}_{T}(\sigma,\kappa) \circ \hat{l}_{T}(\gamma \sigma,\beta \kappa)^{-1} \left( \begin{bmatrix} r(x) \\ x \end{bmatrix}/\Delta_{\alpha \alpha} \right)  \right)  \right) \displaybreak[0]\\
&= \begin{bmatrix} r(x) \\ x \end{bmatrix}/\Delta_{\alpha \alpha} -_{r(x)}  h_{(\gamma \sigma,\beta \kappa)} \left( \pi \left( \begin{bmatrix} r(x) \\ x \end{bmatrix}/\Delta_{\alpha \alpha} \right)  \right) +_{r(x)} \displaybreak[0]\\
&\, \gamma \circ  h_{(\sigma,\kappa)} \left( \beta^{-1} \circ \pi \left( \begin{bmatrix} r(x) \\ x \end{bmatrix}/\Delta_{\alpha \alpha} \right) \right)  +_{r(x)} \displaybreak[0]\\
&\, h_{(\gamma,\beta)} \left( \pi \left( \begin{bmatrix} r(x) \\ x \end{bmatrix}/\Delta_{\alpha \alpha} \right) \right).
\end{align*}
The next step is to evaluate the action $\phi(\sigma,\kappa): \mathrm{Der}(Q,A^{\alpha,\tau},\ast) \rightarrow \mathrm{Der}(Q,A^{\alpha,\tau},\ast)$. If we write $w = l \circ \kappa^{-1} \circ \pi \left( \begin{bmatrix} r(x) \\ x \end{bmatrix}/\Delta_{\alpha \alpha} \right)$, then the action can be evaluated by 
\begin{align*}
\overline{\phi(\sigma,\kappa)(d)} &\left( \begin{bmatrix} r(x) \\ x \end{bmatrix}/\Delta_{\alpha \alpha} \right)  \\
&= \hat{l}_{T}(\sigma,\kappa) \circ \overline{d} \circ \hat{l}_{T}(\sigma,\kappa)^{-1} \left( \begin{bmatrix} r(x) \\ x \end{bmatrix}/\Delta_{\alpha \alpha} \right) \\
&= \hat{l}_{T}(\sigma,\kappa) \Bigg( \sigma^{-1} \left( \begin{bmatrix} r(x) \\ x \end{bmatrix}/\Delta_{\alpha \alpha} -_{r(x)} h_{(\sigma,\kappa)} \circ \pi \left( \begin{bmatrix} r(x) \\ x \end{bmatrix}/\Delta_{\alpha \alpha} \right)  \right) \\
&\quad +_{w} d \circ \pi \circ \sigma^{-1} \left( \begin{bmatrix} r(x) \\ x \end{bmatrix}/\Delta_{\alpha \alpha} -_{r(x)} h_{(\sigma,\kappa)} \circ \pi \left( \begin{bmatrix} r(x) \\ x \end{bmatrix}/\Delta_{\alpha \alpha} \right)  \right)  \Bigg)   \\
&= \hat{l}_{T}(\sigma,\kappa) \Bigg( \sigma^{-1} \left( \begin{bmatrix} r(x) \\ x \end{bmatrix}/\Delta_{\alpha \alpha} -_{r(x)} h_{(\sigma,\kappa)} \circ \pi \left( \begin{bmatrix} r(x) \\ x \end{bmatrix}/\Delta_{\alpha \alpha} \right)  \right) \\
&\quad +_{w} d \circ \kappa^{-1} \circ \pi \left( \begin{bmatrix} r(x) \\ x \end{bmatrix}/\Delta_{\alpha \alpha} \right)  \Bigg)   \\
&=  \begin{bmatrix} r(x) \\ x \end{bmatrix}/\Delta_{\alpha \alpha} -_{r(x)} h_{(\sigma,\kappa)} \circ \pi \left( \begin{bmatrix} r(x) \\ x \end{bmatrix}/\Delta_{\alpha \alpha} \right) +_{r(x)} \sigma \circ d \left( \kappa^{-1} \circ \pi \left( \begin{bmatrix} r(x) \\ x \end{bmatrix}/\Delta_{\alpha \alpha} \right)   \right)  \\
&\quad +_{r(x)} h_{(\sigma,\kappa)} \left( \kappa \circ \pi \circ \overline{d} \circ \hat{l}_{T}(\sigma,\kappa)^{-1} \left( \begin{bmatrix} r(x) \\ x \end{bmatrix}/\Delta_{\alpha \alpha} \right)   \right) \\
&= \begin{bmatrix} r(x) \\ x \end{bmatrix}/\Delta_{\alpha \alpha} -_{r(x)} h_{(\sigma,\kappa)} \circ \pi \left( \begin{bmatrix} r(x) \\ x \end{bmatrix}/\Delta_{\alpha \alpha} \right) +_{r(x)} \sigma \circ d \left( \kappa^{-1} \circ \pi \left( \begin{bmatrix} r(x) \\ x \end{bmatrix}/\Delta_{\alpha \alpha} \right)   \right)  \\
&\quad +_{r(x)} h_{(\sigma,\kappa)} \circ \pi \left( \begin{bmatrix} r(x) \\ x \end{bmatrix}/\Delta_{\alpha \alpha} \right) \\
&= \begin{bmatrix} r(x) \\ x \end{bmatrix}/\Delta_{\alpha \alpha} +_{r(x)} \sigma \circ d \left( \kappa^{-1} \circ \pi \left( \begin{bmatrix} r(x) \\ x \end{bmatrix}/\Delta_{\alpha \alpha} \right)   \right) .
\end{align*}
This shows $\phi(\sigma,\kappa)(d) = d^{(\sigma,\kappa)}$. By the evaluation for 2-cocycle above we see that $S \left( (\gamma,\beta), (\sigma,\kappa) \right) = h_{(\gamma,\beta)} +_{r(x)} h_{(\sigma,\kappa)}^{(\gamma,\beta)} -_{r(x)} h_{(\gamma \sigma,\beta \kappa)}$ which shows $[S]=0$.
\end{proof}

Theorem~\ref{thm:2} decomposes the kernel-preserving automorphisms of the extension $A_{T}(Q,A^{\alpha,\tau},\ast)$ in terms of the derivations and the compatible automorphisms of the datum. This is a decomposition into simpler objects since calculating in the extension requires the full algebraic structure of action and 2-cocycle terms while determining the compatible automorphisms requires just the action terms and the quotient algebra; that is, the algebraic structure of the semidirect product of the datum and the quotient algebra. In the case of groups \cite{wells}, the compatible automorphisms can be further decomposed into automorphisms of the normal subgroup of the kernel and of the quotient algebra which are connected by an analogue of the relation (C1). For general extensions of affine datum, computations of the compatible automorphisms are complicated by the fact that we no longer have a privileged kernel classes which decomposes further the automorphisms of the semidirect product; however, we will consider two general cases where some measure of simplification is possible.

We first consider the case of central extensions in varieties with a difference term. We rely on the characterization in \cite{wires1} of central extensions in such varieties. A ternary term $m$ is a \emph{difference term} for a variety $\mathcal V$ if for all algebras $A \in \mathcal V$ and congruences $\alpha \in \Con A$, the term satisfies 
\[
m(x,x,y) = y \quad \quad \text{and} \quad \quad m(x,y,y) \, [\alpha,\alpha] \, x \quad \quad \quad \quad \quad \quad (x,y, \in \alpha).
\]
It is immediate that the difference term interprets as a Mal'cev operation in any abelian algebra in the variety; in fact, the abelian algebras form a Mal'cev subvariety. We refer the reader to \cite[Thm 7.38]{bergman} for details of the close connection between abelian Mal'cev varieties and varieties of modules, but for the present purpose it suffices to be explicit about one part of that construction. Let $\mathcal A$ be an abelian Mal'cev variety in the signature $\tau$ with Mal'cev term $m(x,y,z)$. Consider the set of terms $R = \left\{ r(x,y) \in F_{\mathcal A}(x,y) : r(y,y) = y \right\}$ in the 2-generated free algebra in $\mathcal A$. Then $R=\left\langle R, +, - , \cdot, y, x \right\rangle$ is a unital ring under the definitions
\begin{align*}
r_{1} + r_{2} &:= m(r_{1},y,r_{2}) , &-r := m(y,r,y) &\quad \quad \text{and} &r_{1}(x,y) \cdot r_{2}(x,y) := r_{1}(r_{2}(x,y),y)
\end{align*}
where $y$ is the zero element for addition and $x$ is the identity for multiplication. Given an abelian algebra $A \in \mathcal A$ and choice of element $\mu \in A$, then $M(A,\mu)$ is defined as the $R$-module over the universe of $A$ where the abelian group operation is given by
\begin{align*}
a + b &:= m^{A}(a,\mu,b) &-a := m(\mu,a,\mu)
\end{align*} 
with zero element $\mu$ and the action of the ring $R$ is given by 
\[
r(x,y) \cdot a := r^{A}(a,\mu) 
\]
for $r(x,y) \in R$, $a \in M$. Given $f \in \tau$, define the terms
\begin{align*}
r_{f,i}(x,y) &:= m(f(y,\ldots,y,x,y,\ldots,y),f(y,\ldots,y),y) \\
d_{f}(x) &:= f(x,\ldots,x)
\end{align*}
for $1 \leq i \leq \ar f$. It follows that $r_{f,i} \in R$. The algebra $A$ and the $R$-module $M(A,\mu)$ are polynomially equivalent where the operations of $A$ can be represented in the module operations by
\begin{align}\label{eqn:idemmodule}
f^{A}(x_{1},\ldots,x_{n}) := \sum_{i=1}^{n} r_{f,i} \cdot x_{i} + d_{f}(\mu) .
\end{align}
It follows from Eq~\eqref{eqn:idemmodule} that if $\mu \in A$ is an idempotent element for the operations, then $A$ and $M(A,\mu)$ are term-equivalent since $d_{f}(\mu)=\mu$ the zero of the module structure.

\begin{proposition}\label{prop:centralcomp}
Let $A \in \mathcal V$ a variety with a difference term and $\pi: A \rightarrow Q$ a central extension realizing affine datum $(Q, A^{\alpha,\tau},\ast)$. Assume $Q$ has an idempotent element. Then the compatible automorphism 
\begin{align}\label{eq:508}
C(Q, A^{\alpha,\tau},\ast) \approx \Aut^{0} A(\alpha)/\Delta_{\alpha 1} \times \Aut Q \approx  \Aut M(A(\alpha)/\Delta_{\alpha 1}, \hat{\delta}) \times \Aut Q
\end{align} 
where $\Aut^{0} A(\alpha)/\Delta_{\alpha 1}$ is the group of automorphisms which fix $\hat{\delta}$ the $\Delta_{\alpha 1}$-class of the diagonal elements.
\end{proposition}
\begin{proof}
Let us focus on the first isomorphism in Eq~\eqref{eq:508} and begin by noting what the assumptions provide us. We are given that $Q = A/\alpha$. We write $\rho: A(\alpha)/\Delta_{\alpha \alpha} \rightarrow Q$ for the extension induced by $\pi$ of the associated semidirect product. Since $A$ realizes affine datum, if we fix an $\alpha$-trace $r: A \rightarrow A$, then every element in $A(\alpha)/\Delta_{\alpha \alpha}$ is uniquely represented in the form $\begin{bmatrix} r(a) \\ a \end{bmatrix}/\Delta_{\alpha \alpha}$ for $a \in A$. Let $l: Q \rightarrow A$ be the lifting such that $r = l \circ \pi$. Let $v \in Q$ be an idempotent element and choose $u \in \pi^{-1}(v)$. Since $A \in \mathcal V$ has a difference term and $\alpha = \ker \pi$ is central, we have by \cite[Lem 3.35]{wires1} an isomorphism for the semidirect product $A(\alpha)/\Delta_{\alpha \alpha} \approx A(\alpha)/\Delta_{\alpha 1} \times Q$ given by $\eta : \begin{bmatrix} a \\ b \end{bmatrix}/\Delta_{\alpha \alpha} \mapsto \left\langle \begin{bmatrix} a \\ b \end{bmatrix}/\Delta_{\alpha 1} \, ,\,  \pi(a) \right\rangle$ where the first-coordinate is given by the canonical epimorphism $\phi: A(\alpha)/\Delta_{\alpha \alpha} \rightarrow A(\alpha)/\Delta_{\alpha 1}$ for the congruence $\Delta_{\alpha 1}/\Delta_{\alpha \alpha}$. Since $\alpha$ is central, the diagonal elements of $A(\alpha)$ form a singleton $\Delta_{\alpha 1}$-class denoted by $\hat{\delta}$. Then $\Aut^{0} A(\alpha)/\Delta_{\alpha 1} = \{ \sigma \in \Aut A(\alpha)/\Delta_{\alpha 1}: \sigma ( \hat{\delta} ) = \hat{\delta} \}$.

Let us note how condition (C1) on the action relates to the algebra $A(\alpha)/\Delta_{\alpha 1}$. According to \cite[Lem 3.6(1a)]{wires1}, $A(\alpha)/\Delta_{\alpha 1}$ is an abelian algebra in which $\hat{\delta}$ is an idempotent element; thus, $A(\alpha)/\Delta_{\alpha 1}$ is term-equivalent to $M( A(\alpha)/\Delta_{\alpha 1} , \hat{\delta} )$. This means for operation symbol $f \in \tau$ with $n=\ar f$, there are ring elements $r_i \in R$ such that $f$ interprets as $f^{A(\alpha)/\Delta_{\alpha 1}}(x_1,\ldots,x_n) = r_{1} \cdot x_{1} + \cdots + r_{n} \cdot x_{1}$. Then by realization of the datum, we can calculate the action terms by 
\begin{align*}
\phi \circ a(f,i)(q_{1},\ldots,x,\ldots,q_{n}) &= \phi \circ F_{f} \big( \delta \circ l(q_{1}),\ldots,x,\ldots,\delta \circ l(q_{n}) \big) \\
&= \phi \circ f^{A(\alpha)/\Delta_{\alpha \alpha}} \big( \delta \circ l(q_{1}),\ldots,x,\ldots,\delta \circ l(q_{n}) \big) \\
&= r_{1} \cdot \hat{\delta} + \cdots + r_{i} \cdot \phi(x) + \cdots + r_{n} \cdot \hat{\delta} \\
&= r_{i} \cdot \phi(x)
\end{align*}
since each $\delta \circ l(q_{i})$ is a diagonal $\Delta_{\alpha \alpha}$-class; therefore, for complimentary automorphisms $(\sigma,\kappa)$ we have 
\begin{align}\label{eqn:200}
\phi \circ a(f,i)(\kappa(q_{1}),\ldots,\sigma(x),\ldots,\kappa(q_{n})) = r_{i} \cdot \phi(\sigma(x)).
\end{align}

Given a pair of compatible automorphisms $(\sigma,\kappa) \in C(Q, A^{\alpha,\tau},\ast)$, define $\hat{\sigma}: A(\alpha)/\Delta_{\alpha 1} \rightarrow A(\alpha)/\Delta_{\alpha 1}$ by $\hat{\sigma} \left(\begin{bmatrix} b \\ a \end{bmatrix}/\Delta_{\alpha 1} \right) := \phi \circ \sigma \left(\begin{bmatrix} r(u) \\ m(a,b,r(u)) \end{bmatrix}/\Delta_{\alpha \alpha} \right)$. Note by (C3) that $\sigma \left( \begin{bmatrix} b \\ b \end{bmatrix}/\Delta_{\alpha 1} \right)$ and $\sigma \left( \begin{bmatrix} r(u) \\ r(u) \end{bmatrix}/\Delta_{\alpha 1} \right)$ are both $\Delta_{\alpha \alpha}$-classes of a diagonal element. We observe
\begin{align}\label{eqn:15}
\begin{split}
\hat{\sigma} \circ \phi \left( \begin{bmatrix} b \\ a \end{bmatrix}/\Delta_{\alpha \alpha} \right) &= \hat{\sigma} \left(\begin{bmatrix} b \\ a \end{bmatrix}/\Delta_{\alpha 1} \right) \\
&= \phi \circ \sigma \left( \begin{bmatrix} r(u) \\ m(a,b,r(u)) \end{bmatrix}/\Delta_{\alpha \alpha} \right) \\
&= m \left( \phi \circ \sigma \left( \begin{bmatrix} b \\ a \end{bmatrix}/\Delta_{\alpha \alpha} \right) , \phi \circ \sigma \left( \begin{bmatrix} b \\ b \end{bmatrix}/\Delta_{\alpha \alpha} \right) , \phi \circ \sigma \left( \begin{bmatrix} r(u) \\ r(u) \end{bmatrix}/\Delta_{\alpha \alpha} \right) \right) \\
&= m\left( \phi \circ \sigma \left( \begin{bmatrix} b \\ a \end{bmatrix}/\Delta_{\alpha \alpha} \right), \hat{\delta}, \hat{\delta} \right) \\ 
&= \phi \circ \sigma \left( \begin{bmatrix} b \\ a \end{bmatrix}/\Delta_{\alpha \alpha} \right)
\end{split}
\end{align}
since $A(\alpha)/\Delta_{\alpha 1}$ is an abelian algebra. We can see from Eq~\eqref{eqn:15} and condition (C3) that $\hat{\sigma}(\hat{\delta}) = \hat{\delta}$. By condition (C2) we see that
\begin{align*}
\rho \circ \sigma \left( \begin{bmatrix} r(u) \\ m(a,b,r(u)) \end{bmatrix}/\Delta_{\alpha \alpha} \right) &= \kappa \circ \rho \left( \begin{bmatrix} r(u) \\ m(a,b,r(u)) \end{bmatrix}/\Delta_{\alpha \alpha} \right) \\
&= \kappa \circ m \left( \rho \left( \begin{bmatrix} b \\ a \end{bmatrix}/\Delta_{\alpha \alpha} \right), \rho \left( \begin{bmatrix} b \\ b \end{bmatrix}/\Delta_{\alpha \alpha} \right), \rho \left( \begin{bmatrix} r(u) \\ r(u) \end{bmatrix}/\Delta_{\alpha \alpha} \right)  \right) \\
&= \kappa \circ \rho \left( \begin{bmatrix} r(u) \\ r(u) \end{bmatrix}/\Delta_{\alpha \alpha} \right) \\
&= \kappa(v)
\end{align*}
since $\rho \left( \begin{bmatrix} b \\ a \end{bmatrix}/\Delta_{\alpha \alpha} \right) = \rho \left( \begin{bmatrix} b \\ b \end{bmatrix}/\Delta_{\alpha \alpha} \right)$. Using the isomorphism $\eta$ we can then represent 
\begin{align}\label{eqn:16}
\sigma \left( \begin{bmatrix} r(u) \\ m(a,b,r(u)) \end{bmatrix}/\Delta_{\alpha \alpha} \right) \longmapsto \left\langle \hat{\sigma} \left(\begin{bmatrix} b \\ a \end{bmatrix}/\Delta_{\alpha 1} \right) , \kappa(v) \right\rangle.
\end{align}
Since $v \in Q$ is idempotent, it follows that $\hat{\sigma}$ is a homomorphism. We also see from Eq~\eqref{eqn:200} condition (C2) collapses to the condition that $\hat{\sigma}$ respects the module structure on $A(\alpha)/\Delta_{\alpha \alpha}$ which is already guaranteed since it is a homomorphism which fixes the zero.

Surjectivity of $\sigma$ and $\phi$ guarantee by Eq~\eqref{eqn:15} that $\hat{\sigma}$ is also surjective. To show injectivity, assume $\hat{\sigma}\left( \begin{bmatrix} b \\ a \end{bmatrix}/\Delta_{\alpha 1} \right) = \hat{\sigma}\left( \begin{bmatrix} d \\ c \end{bmatrix}/\Delta_{\alpha 1} \right)$. Then by the representation in Eq~\eqref{eqn:16} and injectivity of $\sigma$ as an automorphism we conclude that $\begin{bmatrix} r(u) \\ m(a,b,r(u)) \end{bmatrix}/\Delta_{\alpha \alpha} = \begin{bmatrix} r(u) \\ m(c,d,r(u)) \end{bmatrix}/\Delta_{\alpha \alpha}$. Then passing to the quotient
\begin{align*}
\begin{bmatrix} b \\ a \end{bmatrix}/\Delta_{\alpha 1} = m \left( \begin{bmatrix} b \\ a \end{bmatrix}/\Delta_{\alpha 1}, \hat{\delta}, \hat{\delta} \right) &= m \left( \phi \left(\begin{bmatrix} b \\ a \end{bmatrix}/\Delta_{\alpha \alpha} \right), \phi \left(\begin{bmatrix} b \\ b \end{bmatrix}/\Delta_{\alpha \alpha} \right) , \phi \left(\begin{bmatrix} r(u) \\ r(u) \end{bmatrix}/\Delta_{\alpha \alpha} \right) \right) \\
&= \phi \left( \begin{bmatrix} r(u) \\ m(a,b,r(u)) \end{bmatrix}/\Delta_{\alpha \alpha} \right) \\
&= \phi \left( \begin{bmatrix} r(u) \\ m(c,d,r(u)) \end{bmatrix}/\Delta_{\alpha \alpha} \right) \\
&= \begin{bmatrix} d \\ c \end{bmatrix}/\Delta_{\alpha 1}; 
\end{align*}
altogether, $\hat{\sigma}$ is an automorphism.

For the last step, define $\psi: C(Q, A^{\alpha,\tau},\ast) \rightarrow \Aut^{0} A(\alpha)/\Delta_{\alpha 1} \times \Aut Q$ by $\psi(\sigma,\kappa) := (\hat{\sigma},\kappa)$. We can use Eq~\eqref{eqn:15} to show $\psi$ is a homomorphism by observing for all $x \in A(\alpha)/\Delta_{\alpha \alpha}$, 
\begin{align*}
\left( \hat{\gamma} \circ \hat{\sigma} \right) ( \phi(x) ) = \hat{\gamma} \circ \phi \circ \sigma(x) = \phi \circ \gamma \circ \sigma (x) = \widehat{\gamma \circ \sigma}(x) 
\end{align*}
which shows $\psi$ is a homomorphism.

We show surjectivity of $\phi$. Given $(\sigma,\kappa) \in \Aut^{0} A(\alpha)/\Delta_{\alpha 1} \times \Aut Q$ define $\lambda: A(\alpha)/\Delta_{\alpha \alpha} \rightarrow  A(\alpha)/\Delta_{\alpha \alpha}$ by the rule $\lambda \left( \begin{bmatrix} b \\ a \end{bmatrix}/\Delta_{\alpha \alpha} \right) = \eta^{-1} \left( \left\langle  \sigma \left( \begin{bmatrix} b \\ a \end{bmatrix}/\Delta_{\alpha 1} \right) , \kappa \circ \pi(b) \right\rangle  \right)$. It is straightforward to see that $\lambda$ is an automorphism and $(C2)$ holds. Since $\sigma$ fixes $\hat{\delta}$ we see that 
\begin{align*}
\lambda \left( \begin{bmatrix} a \\ a \end{bmatrix}/\Delta_{\alpha \alpha} \right) = \eta^{-1} \left( \left\langle  \sigma \left( \hat{\delta} \right) , \kappa \circ \pi(a) \right\rangle  \right) = \eta^{-1} \left( \left\langle \hat{\delta}  , \kappa \circ \pi(a) \right\rangle  \right) = \begin{bmatrix} l \circ \kappa \circ \pi(a) \\ l \circ \kappa \circ \pi(a) \end{bmatrix}/\Delta_{\alpha \alpha}
\end{align*}
which shows $\lambda$ satisfies condition (C3); therefore, $(\lambda,\kappa)$ is a complimentary pair. This establishes surjectivity of $\psi$. To show injectivity of $\psi$, suppose $\psi(\sigma,\kappa) = (\id,\id)$; thus, $\hat{\sigma} = \id$ and $\kappa = \id$. Then by (C2), we have $\rho \circ \sigma(x) = \kappa \circ \rho (x) = \rho(x)$. This implies $(\sigma(x),x) \in \hat{\alpha}/\Delta_{\alpha \alpha}$ for all $x \in A(\alpha)/\Delta_{\alpha \alpha}$. We also see using Eq~\eqref{eqn:15} that $\phi(x) = \hat{\sigma} \circ \phi \left( x \right) = \phi \circ \sigma \left( x \right)$ which implies $(\sigma(x),x) \in \Delta_{\alpha 1}/\Delta_{\alpha \alpha}$; thus, we see that $(\sigma(x),x) \in \hat{\alpha}/\Delta_{\alpha \alpha} \wedge \Delta_{\alpha 1}/\Delta_{\alpha \alpha}$. By \cite[Lem 3.5(3)]{wires1} we have $\Delta_{\alpha \alpha} = \Delta_{\alpha 1} \wedge \hat{\alpha}$ since $\alpha$ is central; therefore, $\sigma(x)=x$ for all $x \in A(\alpha)/\Delta_{\alpha \alpha}$.  We have shown $\psi$ is an isomorphism.

The second isomorphism in Eq~\eqref{eq:508} is given by the isomorphism $\Aut^{0} A(\alpha)/\Delta_{\alpha 1} \approx \Aut M(A(\alpha)/\Delta_{\alpha 1}, \hat{\delta})$ which follows from Eq~\eqref{eqn:idemmodule} and idempotency of $\hat{\delta} \in A(\alpha)/\Delta_{\alpha 1}$.
\end{proof}

\begin{corollary}\label{cor:800}
Let $A \in \mathcal V$ a variety with a difference term and and $\pi: A \rightarrow Q$ a central extension realizing affine datum $(Q,A^{\alpha,\tau},\ast)$. Assume $\mu \in A$ is idempotent and let $I_{\alpha}$ be the congruence class of $\alpha \in \Con A$ containing $\mu$. Then $C(Q,A^{\alpha,\tau},\ast) \approx \Aut M(I_{\alpha},\mu) \times \Aut Q $. 
\end{corollary}
\begin{proof}
This follows from Proposition~\ref{prop:centralcomp} when we observe that the map $\begin{bmatrix} a \\ b \end{bmatrix}/\Delta_{\alpha 1} \longmapsto m(b,a,\mu)$ shows $A(\alpha)/\Delta_{\alpha 1} \approx I_{\alpha}$ \cite[Sec 2]{wires1}.
\end{proof}

\begin{example}\label{ex:1}
We illustrate with a simple and familiar example. Consider an $R$-module $M$ with submodule $I \leq M$ and set $Q := M/I$. Let $\pi: M \rightarrow Q$ denote the canonical surjection. Since a module is an abelian algebra, the extension is central. Having fixed a lifting $l : Q \rightarrow M$ of $\pi$, then $I \rtimes_{T} Q$ is the module on the set $I \times Q$ with operations
\begin{itemize}

	\item $\left\langle a,x \right\rangle + \left\langle b,y \right\rangle := \left\langle a + b + T_{+}(x,y), x + y \right\rangle$,
	
	\item $r \cdot \left\langle a,x \right\rangle := \left\langle r \cdot a + T_{r}(x), x \right\rangle$,

\end{itemize}
where $T=\{T_{+}, T_{r} : r \in R\}$ is defined by
\begin{itemize}

	\item $T_{+}(x,y) = l(x) + l(y) - l(x+y)$ for $x,y \in Q$,

	\item $T_{r}(x) = r \cdot l(x) - l(r \cdot x)$ for $x \in Q, r \in R$.
	
\end{itemize}
If $\alpha_{I} = \{ (a,b): a-b \in I \}$ is the congruence determined by $I$, then $M(\alpha_{I})/\Delta_{\alpha_{I} 1} \approx I$ witnessed by the isomorphism $\begin{bmatrix} a \\ b \end{bmatrix}/\Delta_{\alpha_{I} \alpha_{I}} \mapsto b-a$ and $M \approx I \otimes^{T} Q$ given by $\begin{bmatrix} a \\ b \end{bmatrix}/\Delta_{\alpha_{I} \alpha_{I}} \mapsto \left\langle b-a, \pi(a) \right\rangle$ \cite[Sec 4]{wires1}. We saw in Eq~\eqref{eqn:200} that the action terms correspond to the module action in $I$; therefore, $\mathrm{Der}(Q,I) = \Hom_{R}(Q,I)$. Corollary~\ref{cor:800} yields the compatible automorphisms $C(Q,I) = \Aut I \times \Aut Q$.

We now consider the case of the direct sum $M = I \oplus Q$. Since the 2-cocycle $T=0$, we have $\ker W_{T} = C(Q,I) = \Aut I \times \Aut Q$ and so by Theorem~\ref{thm:2} we recover the semidirect decomposition
\begin{align}\label{eq:15}
\Aut_{I} M \approx \Hom_{R}(Q,I) \rtimes_{\gamma} (\Aut I \times \Aut Q)
\end{align}
of the group of nonsingular transformations which have $I$ as an invariant subspace. The action $\gamma$ is defined by the lifting $\hat{l}:\Aut I \times \Aut Q \rightarrow \Aut_{I} M$ of $\psi$ given by $\hat{l}(\sigma,\kappa)\left\langle a, x\right\rangle = \left\langle \sigma(a), \kappa(x) \right\rangle$. The identification of stabilizing automorphisms and derivations  $\phi \mapsto d_{\phi}$ is determined by $\phi \left\langle a,x \right\rangle = \left\langle a + d_{\phi}(x), x\right\rangle$. The action is then calculated by $(\sigma,\kappa) \ast \phi \left\langle a,x \right\rangle = \hat{l}(\sigma,\kappa) \circ \phi \circ \hat{l}(\sigma,\kappa)^{-1} \left\langle a,x \right\rangle = \left\langle a + d^{(\sigma,\kappa)}_{\phi}(x), x \right\rangle$ where $d^{(\sigma,\kappa)}_{\phi}(x) := \sigma \circ d_{\phi}(\kappa^{-1}(x))$.

We can also consider the matrix representation of automorphisms in $\phi \in \Aut_{I} M$ afforded by the direct sum $M = I \oplus Q$ in the form
\[
[\phi] = \begin{bmatrix} \sigma & D \\ 0 & \kappa \end{bmatrix}
\]
where $\sigma \in \Aut I, \kappa \in \Aut Q, D \in \Hom_{R}(Q,I)$. In terms of the standard coset-decomposition in groups, the isomorphism in Eq~(\ref{eq:15}) then takes the form
\begin{align*}
\Aut_{I} M  \ni \begin{bmatrix} \sigma & D \\ 0 & \kappa \end{bmatrix} &= \begin{bmatrix} \sigma & D \\ 0 & \kappa \end{bmatrix} \begin{bmatrix} \sigma^{-1} & 0 \\ 0 & \kappa^{-1} \end{bmatrix} \begin{bmatrix} \sigma & 0 \\ 0 & \kappa \end{bmatrix}  \\
&= \begin{bmatrix} I & D \circ \kappa^{-1} \\ 0 & I \end{bmatrix} \begin{bmatrix} \sigma & 0 \\ 0 & \kappa \end{bmatrix}  \longmapsto \left\langle D \circ \kappa^{-1}, (\sigma,\kappa) \right\rangle \in \Hom_{R}(Q,I) \rtimes_{\gamma} \left( \Aut I \times \Aut Q \right),
\end{align*}
and the group product of automorphisms is represented in the two ways by
\begin{align*}
\begin{bmatrix} \alpha \sigma & \alpha D + E \kappa \\ 0 & \beta \kappa \end{bmatrix} &= \begin{bmatrix} \alpha & E \\ 0 & \beta \end{bmatrix} \begin{bmatrix} \sigma & D \\ 0 & \kappa \end{bmatrix} \\
&\longmapsto \left\langle E\beta^{-1}, (\alpha,\beta) \right\rangle \left\langle D\kappa^{-1}, (\sigma,\kappa) \right\rangle = \left\langle E\beta^{-1} + \alpha D\kappa^{-1} \beta^{-1}, (\alpha \sigma, \beta \kappa) \right\rangle.
\end{align*}
\end{example}

For the next case, we assume $\alpha \in \Con A$ is only abelian but strengthen the hypothesis on the difference term variety and assume the extension has an idempotent. Extensions with abelian kernels can then be decomposed by the quotient algebra and a privileged congruence class which provides a translation of Theorem~\ref{thm:2} for these algebras. We briefly describe the facts of the representation but a more thorough development can be found in \cite[Sec 4]{wires1}.

We assume $A \in \mathcal V$ a variety with a weakly-associative difference term $m$ and $u \in A$ is an idempotent element. Let $\pi: A \rightarrow Q$ be an extension with $\alpha = \ker \pi$ abelian and choose a section $l: Q \rightarrow A$ of $\pi$ with $u = l \circ \pi(u)$. Write $Q = A/\alpha$. The extension induces affine datum $(Q,A^{\alpha,\tau},\ast)$ so that $A \approx A_{T}(Q,A^{\alpha,\tau},\ast)$. Define new action terms $\hat{\ast} = \{\hat{a}(f,i) : f \in \tau , i \in [\ar f] \}$ and 2-cocycle terms $S = \{S_{f} : f \in \tau \}$ in the following manner: for $f \in \tau$ with $n=\ar f$, define for $(\vec{x},\vec{a}) \in Q^{\ar f} \times I_{\alpha}^{\ar f}$
\begin{enumerate}

	\item $S_{f}(x_{1},\ldots,x_{n}) = m(f^{A}(l(x_{1}),\ldots,l(x_{n})),l(f^{Q}(x_{1},\ldots,x_{n})),u)$
	
	\item $\hat{a}(f,i)(\vec{x},\vec{a}) = m \big( f^{A}(l(x_{1}),\ldots,m(a_{i},u,l(x_{i})),\ldots,l(x_{n})), f^{A}(l(x_{1}),\ldots,l(x_{n})), u \big)$.

\end{enumerate}
The algebra $I_{\alpha} \rtimes_{\hat{\ast},S} Q$ is defined over the universe $I_{\alpha} \times Q$ with operations
\begin{align*}
f^{I_{\alpha} \rtimes_{\hat{\ast},S} Q}\left(\left\langle a_{1}, x_{1} \right\rangle,\ldots,\left\langle a_{1}, x_{1} \right\rangle \right) = \left\langle \sum_{i=1}^{n} \hat{a}(f,i)(\vec{x},\vec{a}) +_{u} S_{f}(\vec{x})  , f^{Q}(\vec{x}) \right\rangle
\end{align*}
where the sum is the iterated addition $m(x,u,x)= x +_{u} y$. The map $\psi : I_{\alpha} \rtimes_{\hat{\ast},S} Q \rightarrow A_{T}(Q,A^{\alpha,\tau},\ast)$ defined by 
\begin{align}\label{eqn:iso500}
\psi(a,x) = \begin{bmatrix} l(x) \\ m(a,u,l(x)) \end{bmatrix}/\Delta_{\alpha \alpha} 
\end{align}
is an isomorphism with inverse $\psi^{-1}\left( \begin{bmatrix} b \\ a \end{bmatrix}/\Delta_{\alpha \alpha} \right) = \left\langle m(a,b,u) , \pi(b) \right\rangle$. In terms of the map $\psi$, the two sets of action and 2-cocycle terms are related by  
\begin{enumerate}

	\item $\psi(\hat{a}(f,i)(\vec{x},\vec{a}),f^{Q}(\vec{x}) = a(f,i)(x_{1},\ldots,\psi(a_{i},x_{i}),\ldots,x_{n})$

	\item $\psi(S_{f}(\vec{x}),f^{Q}(\vec{x})) = T_{f}(\vec{x})$

\end{enumerate}
An element in an algebra is \emph{characteristic} if it is fixed by every automorphism. We will need to assume the preferred idempotent element is characteristic; of course, this is the case if the idempotent element happens to be a constant of the signature. The \emph{compatible automorphisms} $C(Q,I_{\alpha},\hat{\ast})$ is the set of all pairs $(\sigma,\kappa) \in \Aut I_{\alpha} \times \Aut Q$ such that
\begin{align}\label{eqn:compat200}
\sigma \circ \hat{a}(f,i)(\vec{x},\vec{a}) = \hat{a}(f,i)(\kappa(\vec{x}),\sigma(\vec{a}))
\end{align}
We use the isomorphism Eq~\eqref{eqn:iso500} to connect the definitions of compatible automorphisms for the two representations.

\begin{proposition}\label{prop:diffclass}
Let $A \in \mathcal V$ a variety with a weakly-associative difference term $m$ and $u \in A$ is a characteristic idempotent. If $\alpha \in \Con A$ is abelian and $Q = A/\alpha$, then $C(Q,A^{\alpha,\tau},\ast) \approx C(Q,I_{\alpha},\hat{\ast})$.
\end{proposition}
\begin{proof}
Fix a section $l: Q \rightarrow A$ for the canonical surjection $\pi : A \rightarrow Q$ associated to $\alpha$ such that $l \circ \pi(u)=u$. Then $l$ determines the two representation $A \approx A_{T}(Q,A^{\alpha,\tau},\ast) \approx I_{\alpha} \rtimes_{\hat{\ast},S} Q$. We utilize the previous discussion to relate together the actions $\ast$ and $\hat{\ast}$. Let us note that for $(\sigma,\kappa) \in C(Q,A^{\alpha,\tau},\ast)$, (C2) and (C3) imply that $\begin{bmatrix} u \\ u \end{bmatrix}/\Delta_{\alpha \alpha}$ must be fixed by $\sigma$ and so $\kappa(\pi(u)) = \pi(u)$.

Define $\Omega : C(Q,A^{\alpha,\tau},\ast) \rightarrow C(Q,I_{\alpha},\hat{\ast})$ by $\Omega(\sigma,\kappa):=(\hat{\sigma},\kappa)$ where the map $\hat{\sigma} : I_{\alpha} \rightarrow I_{\alpha}$ is defined by declaring $\hat{\sigma}(a) \in I_{\alpha}$ to be the unique value such that $\left\langle \hat{\sigma}(a) , \pi(u) \right\rangle = \psi^{-1} \circ \sigma \circ \psi (a,\pi(u))$. Since $\psi$ and $\sigma$ are isomorphisms, $\hat{\sigma}$ is well-defined. To see that $\hat{\sigma}$ is a homomorphism, observe that 
\begin{align*}
\left\langle \hat{\sigma} \circ f^{I_{\alpha}}(a_{1},\ldots,a_{n}) , \pi(u) \right\rangle &= \psi^{-1} \circ \sigma \circ \psi ( f^{I_{\alpha}}(a_{1},\ldots,a_{n}) , \pi(u) ) \\
&= \psi^{-1} \circ \sigma \circ \psi \big( f^{I_{\alpha} \rtimes_{\hat{\ast},S}}( \left\langle a_{1},\pi(u) \right\rangle ,\ldots, \left\langle a_{n},\pi(u) \right\rangle) \big) \\
&= f^{I_{\alpha} \rtimes_{\hat{\ast},S}} \big( \psi^{-1} \circ \sigma \circ \psi(a_{1},\pi(u)),\ldots, \psi^{-1} \circ \sigma \circ \psi(a_{n},\pi(u)) \big) \\
&= f^{I_{\alpha} \rtimes_{\hat{\ast},S}} \big( \left\langle \hat{\sigma}(a_{1}), \pi(u) \right\rangle,\ldots,\left\langle \hat{\sigma}(a_{1}), \pi(u) \right\rangle \big) \\
&= \left\langle f^{I_{\alpha}} \big( \hat{\sigma}(a_{1}),\ldots,\hat{\sigma}(a_{n}) \big) , \pi(u) \right\rangle
\end{align*}
which yields $\hat{\sigma} \circ f^{I_{\alpha}}(a_{1},\ldots,a_{n}) = f^{I_{\alpha}} \big( \hat{\sigma}(a_{1}),\ldots,\hat{\sigma}(a_{n}) \big)$. Since $\psi$ and $\sigma$ are bijective, it is easy to see that $\hat{\sigma}$ is too. The next step is to verify Eq~\eqref{eqn:compat200} for the pair $(\hat{\sigma},\kappa)$ whenever $(\sigma,\kappa) \in C(Q,A^{\alpha,\tau},\ast)$. If we set $\hat{a}(f,i)(x_{1},\ldots,a,\ldots,x_{n}) = \omega$ and $f^{Q}(x_{1},\ldots,\pi(u),\ldots,x_{n}) = \xi$, then 
\begin{align*}
\psi \big( \hat{a}(f,i) (\kappa(x_{1}),\ldots, &\hat{\sigma}(a),\ldots,\kappa(x_{n}) ) , f^{Q}(\kappa(x_{1}),\ldots,\pi(u),\ldots,\kappa(x_{n})) \big) \\
&= a(f,i)(\kappa(x_{1}),\ldots, \psi( \hat{\sigma}(a),\pi(u)),\ldots,\kappa(x_{n})) \\
&= a(f,i)(\kappa(x_{1}),\ldots,\sigma \circ \psi(a,\pi(u)),\ldots,\kappa(x_{n})) \\
&= \sigma \circ a(f,i)(x_{1},\ldots,\psi(a,\pi(u)),\ldots,x_{n}) \\
&= \sigma \left( \begin{bmatrix} l(\xi) \\ m(\omega,u,l(\xi)) \end{bmatrix}/\Delta_{\alpha \alpha} \right) \\
&=  m \left( \sigma \left( \begin{bmatrix} u \\ \omega \end{bmatrix}/\Delta_{\alpha \alpha} \right), \sigma \left( \begin{bmatrix} u \\ u \end{bmatrix}/\Delta_{\alpha \alpha} \right) , \sigma \left( \begin{bmatrix} l(\xi) \\ l(\xi) \end{bmatrix}/\Delta_{\alpha \alpha} \right) \right)  \\
&= m \left( \sigma \left( \begin{bmatrix} u \\ \omega \end{bmatrix}/\Delta_{\alpha \alpha} \right),  \begin{bmatrix} u \\ u \end{bmatrix}/\Delta_{\alpha \alpha} , \begin{bmatrix} l \circ \kappa (\xi) \\ l \circ \kappa (\xi) \end{bmatrix}/\Delta_{\alpha \alpha} \right) \\
&= m \big( \psi( \hat{\sigma}(\omega) , \pi(u)), \psi(u,\pi(u)) , \psi(u,\kappa(\xi)) \big)  \\
&= \psi \Big( m^{I_{\alpha} \rtimes_{\hat{\ast},S} Q} \big( \left\langle \hat{\sigma}(\omega), \pi(u) \right\rangle , \left\langle u , \pi(u) \right\rangle , \left\langle u , \kappa(\xi) \right\rangle  \big) \Big)  \\
&= \psi( \hat{\sigma}(\omega) , \kappa(\xi) )  \\
&= \psi \big( \hat{\sigma}(\omega) , f^{Q}(\kappa(x_{1}),\ldots,\pi(u),\ldots,\kappa(x_{n})) \big) .
\end{align*}
Identifying the first coordinates yield Eq~\eqref{eqn:compat200}.

To see that $\Omega$ is a homomorphism it suffices to observe that $\left\langle \widehat{\gamma \circ \sigma}(a) , \pi(u) \right\rangle = \psi^{-1} \circ \gamma \circ \sigma \circ \psi (a,\pi(u)) = \psi^{-1} \circ \gamma \circ \psi \circ \psi^{-1} \sigma \circ \psi (a,\pi(u)) = \psi^{-1} \circ \gamma \circ \psi (\hat{\sigma}(a), \pi(u)) = \left\langle \hat{\gamma} \circ \hat{\sigma}(a), \pi(u) \right\rangle$. For injectivity of $\Omega$, assume $(\hat{\sigma},\kappa) = (\id,\id)$. Then since $(\sigma,\id) \in C(Q,A^{\alpha,\tau},\ast)$, (C2) and (C3) implies $\sigma$ fixes each diagonal class.  
Then $\hat{\sigma} = \id$ implies 
\begin{align*}
m \left( \begin{bmatrix} l(x) \\ a \end{bmatrix}/\Delta_{\alpha \alpha}, \begin{bmatrix} l(x) \\ l(x) \end{bmatrix}/\Delta_{\alpha \alpha}, \begin{bmatrix} u \\ u \end{bmatrix}/\Delta_{\alpha \alpha} \right) &= \begin{bmatrix} u \\ m(a,l(x),u) \end{bmatrix}/\Delta_{\alpha \alpha} \\
&= \psi \big( m(a,l(x),u) , \pi(u) \big) \\
&= \psi \big( \hat{\sigma}(m(a,l(x),u)) , \pi(u) \big)  \\
&= \sigma \left( \begin{bmatrix} u \\ m(a,l(x),u) \end{bmatrix}/\Delta_{\alpha \alpha} \right)  \\
&= m \left( \sigma \left( \begin{bmatrix} l(x) \\ a \end{bmatrix}/\Delta_{\alpha \alpha} \right), \sigma \left( \begin{bmatrix} l(x) \\ l(x) \end{bmatrix}/\Delta_{\alpha \alpha} \right), \sigma \left( \begin{bmatrix} u \\ u \end{bmatrix}/\Delta_{\alpha \alpha} \right) \right)  \\
&= m \left( \sigma \left( \begin{bmatrix} l(x) \\ a \end{bmatrix}/\Delta_{\alpha \alpha} \right), \begin{bmatrix} l(x) \\ l(x) \end{bmatrix}/\Delta_{\alpha \alpha} ,  \begin{bmatrix} u \\ u \end{bmatrix}/\Delta_{\alpha \alpha}  \right) .
\end{align*}
Then $\mathcal V \vDash x = m(m(x,y,z),z,y)$ applied to the above equation yields $\sigma \left( \begin{bmatrix} l(x) \\ a \end{bmatrix}/\Delta_{\alpha \alpha} \right) = \begin{bmatrix} l(x) \\ a \end{bmatrix}/\Delta_{\alpha \alpha}$; that is, $\sigma = \id$.

The last step is to verify surjectivity of $\Omega$. Take $(\sigma,\kappa) \in C(Q,I_{\alpha},\hat{\ast})$ and define $\lambda : A(\alpha)/\Delta_{\alpha \alpha} \rightarrow A(\alpha)/\Delta_{\alpha \alpha}$ by the rule 
\begin{align}\label{eqn:lambda500}
\lambda\left( \begin{bmatrix} l(x) \\ a \end{bmatrix}/\Delta_{\alpha \alpha} \right) := \psi(\sigma \circ m(a,l(x),u),\kappa(x)).
\end{align}
Let us observe that 
\begin{align*}
\left\langle \hat{\lambda}(a) , \pi(u) \right\rangle = \psi^{-1} \circ \lambda \circ \psi(a,\pi(u)) &=  \psi^{-1} \circ \lambda \left( \begin{bmatrix} u \\ a \end{bmatrix}/\Delta_{\alpha \alpha} \right) \displaybreak[0]\\
&= \psi^{-1} \circ \psi ( \sigma(m(a,u,u)),\pi(u)) \displaybreak[0]\\
&= \left\langle \sigma(a), \pi(u) \right\rangle 
\end{align*}
and so $\hat{\lambda} = \sigma$. It remains to show $\lambda$ is an automorphism and $(\lambda,\kappa) \in C(Q,A^{\alpha,\tau},\ast)$. 

For injectivity, note $\lambda\left( \begin{bmatrix} l(x) \\ a \end{bmatrix}/\Delta_{\alpha \alpha} \right) = \lambda\left( \begin{bmatrix} l(y) \\ b \end{bmatrix}/\Delta_{\alpha \alpha} \right)$ implies $\sigma(m(a,l(x),u)) = \sigma(m(b,l(y),u))$ and $\kappa(x) = \kappa(y)$. Then $x=y$ and $m(a,l(x),u) = m(b,l(y),u)$ implies $a=b$. For surjectivity, we use Eq~\eqref{eqn:lambda500} to solve $\lambda \left( \begin{bmatrix} l(x) \\ a \end{bmatrix}/\Delta_{\alpha \alpha} \right) = \begin{bmatrix} l(y) \\ b \end{bmatrix}/\Delta_{\alpha \alpha}$. Since $\psi, \sigma,\kappa$ are isomorphisms, this can be accomplished by $a = m \big( \sigma^{-1} \circ m(b,l(y),u), u, l \circ \kappa^{-1}(y) \big)$ and $x = \kappa^{-1}(y)$. To see that $\lambda$ is a homomorphism, take $\vec{a} \in A^{\ar f}$ where $x_{i} = \pi(a_{i})$ and set $p_{i}=f^{A}(l(x_{1}),\ldots,a_{i},\ldots,l(x_{n}))$ and $w = f^{A}(l(x_{1}),\ldots,l(x_{n}))$ so that we can write $a(f,i)(x_{1},\ldots,\begin{bmatrix} l(x_{i}) \\ a_{i} \end{bmatrix}/\Delta_{\alpha \alpha},\ldots,x_{n}) = \begin{bmatrix} w \\ p_{i} \end{bmatrix}/\Delta_{\alpha \alpha}$. Then using the fact that $(\sigma,\kappa) \in C(Q,I_{\alpha},\hat{\ast})$ we have 
\begin{align*}
\sigma \circ m(p_{i},w,u) &= \sigma \circ m(f^{A}(l(x_{1}),\ldots,a_{i},\ldots,l(x_{n})),f^{A}(l(x_{1}),\ldots,l(x_{n})),u) \displaybreak[0]\\
&= \sigma \circ m(f^{A}(l(x_{1}),\ldots,m(m(a_{i},l(x_{i}),u),u,l(x_{i})),\ldots,l(x_{n})),f^{A}(l(x_{1}),\ldots,l(x_{n})),u) \displaybreak[0]\\
&= \sigma \circ \hat{a}(f,i)(\vec{x},(m(a_{1},l(x_{1}),u),\ldots,m(a_{n},l(x_{n}),u))) \displaybreak[0]\\
&= \hat{a}(f,i) \big(\kappa(\vec{x}),(\sigma \circ m(a_{1},l(x_{1}),u),\ldots,\sigma \circ m(a_{n},l(x_{n}),u)) \big) .
\end{align*}
We then calculate in the semidirect product 
\begin{align*}
\lambda \circ F_{f} &\left( \begin{bmatrix} l(x_{1}) \\ a_{1} \end{bmatrix}/\Delta_{\alpha \alpha} ,\ldots, \begin{bmatrix} l(x_{n}) \\ a_{n} \end{bmatrix}/\Delta_{\alpha \alpha} \right) \displaybreak[0]\\
&= \lambda \left( \sum_{i=1}^{n} a(f,i)(x_{1},\ldots,\begin{bmatrix} l(x_{i}) \\ a_{i} \end{bmatrix}/\Delta_{\alpha \alpha},\ldots,x_{n}) \right) \displaybreak[0]\\
&= \lambda \left( \begin{bmatrix} w \\ p_{1} +_{w} \cdots +_{w} p_{n} \end{bmatrix}/\Delta_{\alpha \alpha} \right) \displaybreak[0]\\ 
&= \psi \Big( \sigma \circ m(p_{1} +_{w} \cdots +_{w} p_{n}, w, u ) , \kappa(f^{Q}(\vec{x})) \Big) \displaybreak[0]\\ 
&= \psi \Big( \sigma( m(p_{1},w,u) +_{u} \cdots +_{u} m(p_{n},w,u) ) , \kappa(f^{Q}(\vec{x})) \Big) \displaybreak[0]\\ 
&= \psi \Big( \sum_{i=1}^{n} \sigma \circ m(p_{i},w,u) , \kappa(f^{Q}(\vec{x})) \Big) \displaybreak[0]\\ 
&= \psi \left( \sum_{i=1}^{n} \hat{a}(f,i) \big(\kappa(\vec{x}),(\sigma \circ m(a_{1},l(x_{1}),u),\ldots,\sigma \circ m(a_{n},l(x_{n}),u)) \big) , f^{Q}(\kappa(\vec{x})) \right) \displaybreak[0]\\ 
&= \psi \circ f^{I_{\alpha} \rtimes_{\hat{\ast}} Q} \Big( \left\langle \sigma \circ m(a_{1},l(x_{1}),u) , \kappa(x_{1}) \right\rangle ,\ldots,\left\langle \sigma \circ m(a_{n},l(x_{n}),u) , \kappa(x_{n}) \right\rangle \Big) \displaybreak[0]\\ 
&= F_{f} \Big( \psi(\sigma \circ m(a_{1},l(x_{1}),u),\kappa(x_{1})),\ldots,\psi(\sigma \circ m(a_{n},l(x_{n}),u),\kappa(x_{n})) \Big) \displaybreak[0]\\
&= F_{f} \Big( \lambda \left( \begin{bmatrix} l(x_{1}) \\ a_{1} \end{bmatrix}/\Delta_{\alpha \alpha} \right),\ldots, \lambda \left( \begin{bmatrix} l(x_{n}) \\ a_{n} \end{bmatrix}/\Delta_{\alpha \alpha} \right) \Big) .
\end{align*}
Finally, we see that 
\begin{align*}
\lambda \left( \begin{bmatrix} l(x) \\ l(x) \end{bmatrix}/\Delta_{\alpha \alpha} \right) &= \psi( \sigma \circ m(l(x),l(x),u) , k(x)) \displaybreak[0]\\
&= \psi(\sigma(u) , k(x)) = \psi(u,\kappa(x)) = \begin{bmatrix} l(\kappa(x)) \\ m(u,u,l(\kappa(x))) \end{bmatrix}/\Delta_{\alpha \alpha} = \begin{bmatrix} l(\kappa(x)) \\ l(\kappa(x)) \end{bmatrix}/\Delta_{\alpha \alpha} 
\end{align*}
which yields (C3) from which (C2) follows. Condition (C1) can be obtained by the homomorphism property
\begin{align*}
\lambda \circ a(f,i) &\big( x_{1},\ldots, \begin{bmatrix} l(y) \\ a \end{bmatrix}/\Delta_{\alpha \alpha},\ldots,x_{n} \big) \\
&= \lambda \circ F_{f} \left(  \begin{bmatrix} l(x_{1}) \\ l(x_{1}) \end{bmatrix}/\Delta_{\alpha \alpha},\ldots, \begin{bmatrix} l(y) \\ a \end{bmatrix}/\Delta_{\alpha \alpha} ,\ldots,\begin{bmatrix} l(x_{n}) \\ l(x_{n}) \end{bmatrix}/\Delta_{\alpha \alpha} \right) \\
&= F_{f} \left(  \lambda \left( \begin{bmatrix} l(x_{1}) \\ l(x_{1}) \end{bmatrix}/\Delta_{\alpha \alpha} \right),\ldots, \lambda \left( \begin{bmatrix} l(y) \\ a \end{bmatrix}/\Delta_{\alpha \alpha} \right) ,\ldots, \lambda \left( \begin{bmatrix} l(x_{n}) \\ l(x_{n}) \end{bmatrix}/\Delta_{\alpha \alpha} \right) \right) \\
&= F_{f} \left(  \begin{bmatrix} l(\kappa(x_{1})) \\ l(\kappa(x_{1})) \end{bmatrix}/\Delta_{\alpha \alpha},\ldots, \lambda \left( \begin{bmatrix} l(y) \\ a \end{bmatrix}/\Delta_{\alpha \alpha} \right),\ldots,\begin{bmatrix} l(\kappa(x_{n})) \\ l(\kappa(x_{n})) \end{bmatrix}/\Delta_{\alpha \alpha} \right) \\
&= a(f,i) \big( \kappa(x_{1}),\ldots, \lambda \left( \begin{bmatrix} l(y) \\ a \end{bmatrix}/\Delta_{\alpha \alpha} \right) ,\ldots,\kappa(x_{n}) \big) .
\end{align*}
\end{proof}

\begin{example}
Let $A$ be a group with multiple operators (or $\Omega$-group \cite{higgins}) and 1 the neutral group element with $f(1,\ldots,1)=1$ for all $f \in \Omega$. The difference term is given by the group term $m(x,y,z) = x \cdot y^{-1} \cdot z$ which clearly satisfies $x = m(m(x,y,z),z,y)$ by group properties; in addition, any homomorphism between $\Omega$-groups must also preserve the group structure which implies $1$ is a characteristic idempotent. For an abelian congruence $\alpha \in \Con A$ with $Q = A/\alpha$ and choice of section $l:Q \rightarrow A$ with $l(1)=1$, we have $A \approx I_{\alpha} \rtimes_{\ast,T} Q$ where
\begin{itemize}
	
	\item $a(\cdot,1)((x,y),(a,b)) = (a \cdot l(x) \cdot l(y)) \cdot (l(x) \cdot l(y))^{-1} = a$;
	
	\item $a(\cdot,2)((x,y),(a,b)) = (l(x) \cdot b \cdot l(y)) \cdot (l(x) \cdot l(y))^{-1} = b^{l(x)}$;
	
	\item $a(f,i)(\vec{x},\vec{a}) = f^{A}(l(x_{1}),\ldots,a_{i}\cdot l(x_{i}),\ldots,l(x_{n})) \cdot f^{A}(l(x_{1}),\ldots,l(x_{n}))^{-1}$  $(f \in \Omega)$;
	
	\item $T_{\cdot}(x,y) = l(x) \cdot l(y) \cdot l(x \cdot y)^{-1}$;
	
	\item $T_{f}(x_{1},\ldots,x_{n}) = f^{A}(l(x_{1}),\ldots,l(x_{n})) \cdot l(f^{Q}(x_{1},\ldots,x_{n}))^{-1}$ \ $(f \in \Omega)$ .

\end{itemize}

The compatible automorphisms consist of all pairs $(\sigma,\kappa) \in \Aut I_{\alpha} \times \Aut Q$ such that
\begin{itemize}

	\item $\sigma(a^{l(x)}) = \sigma(a)^{l\circ \kappa(x)}$
	
	\item $\sigma \big( f(l(x_{1}),\ldots,a_{i}\cdot l(x_{i}),\ldots,l(x_{n})) \cdot f(l(x_{1}),\ldots,l(x_{n}))^{-1} \big) = f(l \circ \kappa(x_{1}),\ldots,\sigma(a) \cdot l \circ \kappa(x_{i}),\ldots,l\circ \kappa(x_{n})) \cdot f(l \circ \kappa(x_{1}),\ldots,l \circ \kappa(x_{n}))^{-1}$ .

\end{itemize}

A special case of $\Omega$-groups occurs when $A$ is an $R$-module with multilinear operations indexed by $F$. The representation $A \approx I_{\alpha} \rtimes_{\ast,T} Q$ is constructed from 
\begin{itemize}

	\item $a(f,i)(\vec{x},\vec{a}) = f^{A}(l(x_{1}),\ldots,a_{i} + l(x_{i}),\ldots,l(x_{n})) - f^{A}(l(x_{1}),\ldots,l(x_{n})) = f^{A}(l(x_{1}),\ldots,a_{i},\ldots,l(x_{n}))$
	
	\item $T_{+}(x,y) = l(x) + l(y) - l(x+y)$;
	
	\item $T_{r}(x) = r \cdot l(x) - l(r \cdot x)$ \ $(r \in R)$;
	
	\item $T_{f}(x_{1},\ldots,x_{n}) = f^{A}(l(x_{1}),\ldots,l(x_{n})) - l(f^{Q}(x_{1},\ldots,x_{n}))$

\end{itemize}
since the action terms associated to the group operation are trivial by commutativity and multilinearity simplifies the actions associated to the operations in $F$. This agrees with \cite{multiext} when restricted to abelian ideals. The compatible automorphisms consist of all pairs $(\sigma,\kappa) \in \Aut I_{\alpha} \times \Aut Q$ such that
\begin{align*}
\sigma \circ f^{A}(l(x_{1}),\ldots,a_{i},\ldots,l(x_{n})) = f^{A}(l\circ \kappa(x_{1}),\ldots,\sigma(a_{i}),\ldots,l\circ \kappa(x_{n}))
\end{align*}
for each $f \in F$ which is at least binary. 
\end{example}

\begin{proof}(of Corollary~\ref{cor:1})
Since the idempotent $u$ is preserved by any automorphism, we see that an automorphism $\phi$ respects $\alpha$ if and only if $\phi(I_{\alpha}) = I_{\alpha}$; that is, $\Aut_{\alpha} A = \Aut_{I_{\alpha}} A$. The isomorphism $I_{\alpha} \rtimes_{\hat{\ast},T'} Q \approx A_{T}(Q,A^{\alpha,\tau},\ast)$ from Eq~\eqref{eqn:iso500} yields isomorphisms of cohomology \cite{wires1}
\begin{align*}
H^{2}_{\mathcal V}(Q,I_{\alpha},\ast) &\approx H^{2}_{\mathcal V}(Q,A^{\alpha,\tau},\ast') &\text{ and } & & \mathrm{Der}(Q,I_{\alpha},\ast) &\approx \mathrm{Der}(Q,A^{\alpha,\tau},\ast').
\end{align*}
given by
\begin{align*}
\psi(T'_{f}(\vec{x}),f^{Q}(\vec{x})) &= T_{f}(\vec{x}) &\text{ and } & & \psi(h'(x),x) = h(x) \text{ for } h' \in \mathrm{Der}(Q,I_{\alpha},\ast).
\end{align*}
Proposition~\ref{prop:diffclass} gives the isomorphism $\Omega : C(Q,A^{\alpha,\tau},\ast) \rightarrow C(Q,I_{\alpha},\hat{\ast})$. The exact sequence in Eq~\eqref{eqn:idemwells} is then reconstructed from the diagram
\[
\begin{tikzcd}
	1 & {\mathrm{Der}(Q,A^{\alpha,\tau},\ast)} & {\mathrm{Aut}_{\alpha} A } & {C(Q,A^{\alpha,\tau},\ast)} & {H^{2}_{\mathcal V}(Q,A^{\alpha,\tau},\ast)} & 1 \\
	1 & {\mathrm{Der}(Q,I_{\alpha},\hat{\ast})} & {\mathrm{Aut}_{I_{\alpha}} A } & {C(Q,I_{\alpha},\hat{\alpha})} & {H^{2}_{\mathcal V}(Q,I_{\alpha},\hat{\ast})} & 1
	\arrow[from=1-1, to=1-2]
	\arrow[from=1-2, to=1-3]
	\arrow["\approx", from=1-2, to=2-2]
	\arrow["\psi", from=1-3, to=1-4]
	\arrow["{=}", no head, from=1-3, to=2-3]
	\arrow["{W_{T}}", from=1-4, to=1-5]
	\arrow["\Omega", from=1-4, to=2-4]
	\arrow[from=1-5, to=1-6]
	\arrow["\approx", from=1-5, to=2-5]
	\arrow[from=2-1, to=2-2]
	\arrow[from=2-2, to=2-3]
	\arrow[from=2-3, to=2-4]
	\arrow[from=2-4, to=2-5]
	\arrow[from=2-5, to=2-6]
\end{tikzcd}
\]
\end{proof}

If we identify $A$ with its representation $I_{\alpha} \rtimes_{\ast,T} Q$, then Corollary~\ref{cor:1} provides the group extension $\Aut_{I_{\alpha}} A \stackrel{\psi}{\longrightarrow} \ker W_{T}$ which has a section $l_{T}$ which is determined by $l_{T}(\sigma,\kappa)(a,x) = \left\langle \sigma(a) +_{u} h \circ \kappa(x), \kappa(x) \right\rangle$ where $h: Q \rightarrow I_{\alpha}$ witnesses $T \sim T^{(\sigma,\kappa)}$. In terms of this extension, any automorphism $\phi \in \Aut_{I_{\alpha}} A$ is represented by a triple $\phi = (d_{\gamma},\sigma,\kappa) \in \mathrm{Der}(Q,I_{\alpha},\ast) \times \Aut \, I_{\alpha} \times \Aut \, Q$ such that 
\begin{itemize}

	\item $d_{\gamma}$ is the derivation corresponding to stabilizing automorphism $\gamma$ where $\gamma(a,x) = \left\langle a +_{u} d_{\gamma}(x), x \right\rangle$, 

	\item $\sigma \circ a(f,i)(\vec{x},\vec{a}) = a(f,i)(\kappa(\vec{x}),\sigma(\vec{a}))$, and 
	
	\item there exists $h: Q \rightarrow I_{\alpha}$ such that $T_{f}(\vec{x}) - \sigma \circ T_{f}(\kappa^{-1}(\vec{x})) = \sum_{i=1}^{\ar f} a(f,i)(\vec{x},h(\vec{x})) - h(f^{Q}(\vec{x}))$ for each $f \in \tau$.

\end{itemize}
The action of the automorphism on $A$ is reconstructed by 
\begin{align*}
\phi(a,x) = \gamma \circ l_{T}(\sigma,\kappa)(a,x) = \left\langle \sigma(a) +_{u} h \circ \kappa(x) +_{u} d_{\gamma} \circ \kappa(x) , \kappa(x) \right\rangle.
\end{align*}

\vspace{0.5cm}

%%%%%%%%%%%%%%%%%%%%%%%%%%%%%%%%%%%%%%%%%%%%%%%%%%%%%%%%%%%%%%%%%%%%%%%%%%%%%%%%%%%%%%%%%%%%%%%%%%%%%%%%%%%%%%%%%%%%%%%%%%%%%%%

\section{Demonstration for Theorem~\ref{thm:4}}\label{section:4}
\vspace{0.3cm}

Fix a variety $\mathcal V$ in the signature $\tau$ of $R$-modules expanded by multilinear operations named by $F$. Let $A \in \mathcal V$ and $\pi : A \rightarrow Q$ a surjective homomorphism with $\ker \pi$ determined by the ideal $I \triangleleft A$. Take $\phi \in \Aut _{I} A$. For any lifting $l: Q \rightarrow A$ of $\pi$, we can verify that $\phi_{l} := \pi \circ \phi \circ l: Q \rightarrow Q$ is an automorphism of $Q$. The restriction of $\phi$ to $I$ is denoted by $\phi|_{I} : I \rightarrow I$. As in Section~\ref{section:2}, it follows that $\psi: \Aut_{I} \rightarrow \Aut I \times \Aut Q$ given by $\psi(\phi) := (\phi|_{I},\phi_l)$ is a group homomorphism independent of the lifting $l$.

For nonabelian extensions, the actions terms are usually folded into the notion of a 2-cocycle (or factor system) and are affected by the equivalence determined by 2-coboundaries. The compatible automorphisms of the datum $(Q,I)$ is then given by the full direct product $\Aut I \times \Aut Q$, but the previous ``compatibility'' notion is incorporated into the action of the full product on cohomology classes $H^{2}_{\mathcal V}(Q,I)$ by the addition of a new twisting of the action terms in a 2-cocycle. The action of $\Aut I \times \Aut Q$ on the $\mathcal V$-compatible 2-cocycles $Z^{2}_{\mathcal V}(Q,I)$ is given by the following: for $(\sigma,\kappa) \in \Aut I \times \Aut Q$ and 2-cocycle $T \in  Z^{2}_{\mathcal V}(Q,I)$, define the 2-cocycle 
\[
T^{(\sigma,\kappa)} = \left\{ T^{(\sigma,\kappa)}_{+},T^{(\sigma,\kappa)}_{r},T^{(\sigma,\kappa)}_{f},a(f,s)^{(\sigma,\kappa)}: r \in R, f \in F, s \in [ \ar f]^{\ast} \right\}
\]
by the rules
\begin{align*}
T^{(\sigma,\kappa)}_{+}(x,y) &:= \sigma \circ T_{+}(\kappa^{-1}(x),\kappa^{-1}(y)) \\
T^{(\sigma,\kappa)}_{r}(x) &:= \sigma \circ T_{r}(\kappa^{-1}(x)) \\ 
T^{(\sigma,\kappa)}_{f}(\vec{x}) &:= \sigma \circ T_{f}(\kappa^{-1}(\vec{x})) \\
a(f,s)^{(\sigma,\kappa)}(\vec{x},\vec{a}) &:= \sigma \circ a(f,s)(\kappa^{-1}(\vec{x}),\sigma^{-1}(\vec{a})) .
\end{align*}
Given $t = s \in \Id \mathcal V$, because $T$ is a $\mathcal V$-compatible 2-cocycle we have the corresponding identity $t^{\partial,T} = s^{\partial,T}$ in the signature of the 2-cocycle such that the multi-sorted structure $\left\langle I \cup Q, \tau^{I}, \tau^{Q}, T \right\rangle \vDash t^{\partial,T} = s^{\partial,T}$. By applying $\sigma$ and $\kappa^{-1}$ to the interpretations of the terms $t^{\partial,T}$ and $s^{\partial,T}$ in a manner similar to that outlined in Section~\ref{section:2}, we would be able to conclude that $\left\langle I \cup Q, \tau^{I}, \tau^{Q}, T^{(\sigma,\kappa)} \right\rangle \vDash t^{\partial,T^{(\sigma,\kappa)}} = s^{\partial,T^{(\sigma,\kappa)}}$; therefore, $T^{(\sigma,\kappa)} \in Z^{2}_{\mathcal V}(Q,I)$. An argument similar to that given in Section~\ref{section:2} again shows $[T]^{(\sigma,\kappa)}:=[T^{(\sigma,\kappa)}]$ yields a well-defined action.

Define $W_{T}(\sigma,\kappa) := [T - T^{(\sigma,\kappa)}]$ for $(\sigma,\kappa) \in \Aut I \times \Aut Q$. For $[T] \in H^{2}_{\mathcal V}(Q,I)$ and $(\sigma,\kappa) \in \Aut I \times \Aut Q$, it may be that $[T - T^{(\sigma,\kappa)}] \not\in H^{2}_{\mathcal V}(Q,I)$ since the nonabelian cohomology classes may no longer be closed under the natural addition induced by $I$. One way to remedy this defect occurs in the original group case, where the nonabelian cohomology classes realizing a fixed action can be identified with the cohomology classes realizing a related action on the center of the kernel. This identification is then incorporated into the definition of the Wells derivation \cite{wells,autgrp}; unfortunately, an analogue of this comparison for our general varieties is not established in \cite{wires1} and so we resort to the convenient stopgap of taking the free abelian group generated by the cohomology classes as a formal codomain.

\begin{proof} (proof of Theorem \ref{thm:4})
As in the proof of Theorem~\ref{thm:2}, we see that the kernel of $\psi$ is given by $\ker \psi = \left\{ \phi \in \Aut_{I} A : \phi|_{I} = \id, \pi = \pi \circ \phi \right\} \approx \mathrm{Der}(Q,I)$.

We now demonstrate exactness at $\Aut {I} \times \Aut Q$. Having fixed a lifting $l: Q \rightarrow A$, we again see that $\phi \circ l \circ \phi^{-1}_{l}: Q \rightarrow A$ is another lifting of $\pi: A \rightarrow Q$ and by direct definition of a 2-cocycle defined by a lifting we see that $T^{(\phi|_{I},\phi_{l})}$ is the 2-cocycle defined by the lifting $\phi \circ l \circ \phi^{-1}_{l}$. Then by \cite{wires1} we have $T \sim T^{(\phi|_{I},\phi_{l})}$ which yields $W_{T}\circ \psi(\phi) = 0$; therefore, $\im \psi \leq \Aut I \times \Aut Q$.

We now assume $(\sigma,\kappa) \in \ker W_{T}$; equivalently, $[T] = [T^{(\sigma,\kappa)}]$ with $(\sigma,\kappa) \in \Aut I \times \Aut Q$. Then there exists a map $h: Q \rightarrow I$ such that $h(0) = 0$ and after pre-composing with $\kappa^{-1}$ we have the following:
\begin{enumerate}

	\item[(E1)] $T_{+}(\kappa(x),\kappa(y)) - \sigma \circ T_{+}(x,y) = h(\kappa(x)) + h(\kappa(y)) - h(\kappa(x) + \kappa(y))$;
	
	\item[(E2)] $T_{r}(\kappa(x)) - \sigma \circ T_{r}(x) = r \cdot h(\kappa(x)) - h(r \cdot \kappa(x))$ for each $r \in R$;
	
	\item[(E3)] for each $f \in F$, 
	\[
	T_{f}(\kappa(\vec{x})) - \sigma \circ T_{f}(\vec{x}) = \sum_{s \in [\ar f]^{\ast}} (-1)^{1+|s|} a(f,s)(\kappa(\vec{x}),h(\kappa(\vec{x}))) + (-1)^{1+n}f^{I}(h(\kappa(\vec{x}))) - h(f^{Q}(\kappa(\vec{x})));
	\]
	
	\item[(E4)] for each $f \in F$ and $s \in [\ar f]^{\ast}$,
\begin{align*}
	a(f,s)(\kappa(\vec{x}),\sigma(\vec{a})) - \sigma \circ a(f,s)(\vec{x},\vec{a}) &= \sum_{s \subseteq r \subseteq [\ar f]} (-1)^{1 + |r| - |s|} a(f,r)\left( \kappa(\vec{x}), h(\kappa(\vec{x})) \right)_{s}[ \sigma(\vec{a}) ] \\
	&\quad + (-1)^{1 + |\ar f| - |s|}f(h(\kappa(\vec{x})))_{s}[ \sigma(\vec{a}) ].
\end{align*}

\end{enumerate}
At this point, we explicitly use the isomorphism $A \approx I \rtimes_{T} Q$ from Wires \cite{multiext} given by $A \ni x \longmapsto \left\langle x - l \circ \pi(x), \pi(x) \right\rangle$. Define $l(\sigma,\kappa) : I \rtimes_{T} Q \rightarrow I \rtimes_{T} Q$ by the rule $\hat{l}(\sigma,\kappa)\left\langle a,x \right\rangle := \left\langle \sigma(a) - h(\kappa(x)), \kappa(x) \right\rangle$.

To show $\hat{l}(\sigma,\kappa)$ is a homomorphism, take a multilinear operation $f \in F$ with $n = \ar f$ and $\vec{a} \in I^{n}, \vec{x} \in Q^{n}$. Using (E3) and (E4), we calculate
\begin{align*}
F_{f} &\left( \hat{l}(\sigma,\kappa) \left\langle a_{1},x_{1} \right\rangle,\ldots,\hat{l}(\sigma,\kappa)\left\langle a_{n},x_{n} \right\rangle \right) \displaybreak[0]\\
&= F_{f} \left( \left\langle \sigma(a_{1}) - h(\kappa(x_{1})), \kappa(x_{1})  \right\rangle,\ldots,\left\langle \sigma(a_{n}) - h(\kappa(x_{n})), \kappa(x_{n})  \right\rangle \right)  \displaybreak[0]\\
&= \left\langle f^{I}\left(\sigma(\vec{a}) - h(\kappa(\vec{x})) \right) + \sum_{s \in [n]^{\ast}} a(f,s)\left(\kappa(\vec{x}), \sigma(\vec{a}) - h(\kappa(\vec{x}) \right) + T_{f}(\kappa(\vec{x})) , f^{Q}(\kappa(\vec{x})) \right\rangle \displaybreak[0]\\
&= \Bigg<  f^{I}(\sigma(\vec{a})) + \sum_{ \emptyset \neq t \subseteq [n]} (-1)^{|t|} f^{I} \left( \sigma(\vec{a}) \right)_{t}[h(\kappa(\vec{x}))] + \sum_{s \in [n]^{\ast}} \sum_{r \subseteq s} (-1)^{|r|} a(f,s)\left( \kappa(\vec{x}), \sigma(\vec{a}) \right)_{r}[h(\kappa(\vec{x}))] \\
&\quad + T_{f}(\kappa(\vec{x})), \kappa \left( f^{Q}(\vec{x}) \right) \Bigg> \displaybreak[0]\\
&= \Bigg< f^{I}(\sigma(\vec{a})) + \sum_{u \in [n]^{\ast}} (-1)^{n - |u|} f^{I} \left(h(\kappa(\vec{x})) \right)_{u}[\sigma(\vec{a})]  +  (-1)^{n} f^{I} \left( h(\kappa(\vec{x})) \right)  \displaybreak[0]\\
&\quad + \sum_{u \in [n]^{\ast}} \sum_{u \subseteq v \in [n]^{\ast}} (-1)^{|v| - |u|} a(f,v)(\kappa(\vec{x}),h(\kappa(\vec{x})))_{u}[\sigma(\vec{a})]  + T_{f}(\kappa(\vec{x})) , \kappa \left(f^{Q}(\vec{x}) \right)  \Bigg>  \displaybreak[0]\\ 
&= \left\langle  \sigma \circ f^{I}(\vec{a}) + \sum_{s \in [n]^{\ast}} \sigma \circ a(f,s)(\vec{x},\vec{a}) + \sigma \circ T_{f}(\vec{x}) - h \left( \kappa(f^{Q}(\vec{x})) \right), \kappa \left(f^{Q}(\vec{x}) \right) \right\rangle \displaybreak[0]\\
&= \hat{l}(\sigma,\kappa) \left\langle f^{I} \left( \vec{a} \right) + \sum_{s \in [n]^{\ast}} a(f,s)(\vec{x},\vec{a}) + T_{f}(\vec{x}), f^{Q}(\vec{x}) \right\rangle \displaybreak[0]\\
&= \hat{l}(\sigma,\kappa) \left( F_{f}\left( \left\langle a_{1},x_{1} \right\rangle, \ldots, \left\langle a_{1},x_{1} \right\rangle \right) \right).
\end{align*}
For the module operations, using (E1) and (E2) we have
\begin{align*}
r \cdot &\hat{l}(\sigma,\kappa)\left\langle a,x\right\rangle + \hat{l}(\sigma,\kappa)\left\langle b,y \right\rangle \displaybreak[0]\\
&= \left\langle r \cdot \sigma(a) - r \cdot h(\kappa(x)) + T_{r}(\kappa(x)) + \sigma(b) - h(\kappa(y)) + T_{+}(r \cdot \kappa(x),\kappa(y)), r \cdot \kappa(x) + \kappa(y) \right\rangle \displaybreak[0]\\
&= \left\langle \sigma(r \cdot a) + \sigma \circ T_{r}(x) - h(r \cdot \kappa(x)) + \sigma(b) + \sigma \cdot T_{+}(r \cdot x,y) + h(\kappa(r \cdot x)), \kappa(r \cdot x + y) \right\rangle \displaybreak[0]\\
&= \hat{l}(\sigma,\kappa) \left\langle r \cdot a + T_{r}(x) + b + T_{+}(r \cdot x, y) , r \cdot x + y \right\rangle \displaybreak[0]\\
&= \hat{l}(\sigma,\kappa)\left( r \cdot \left\langle a,x \right\rangle + \left\langle b,y \right\rangle \right).
\end{align*}
We have shown $\hat{l}(\sigma,\kappa)$ is a homomorphism. It is straightforward to see that $\hat{l}(\sigma,\kappa)$ is bijective such that $\hat{l}(\sigma,\kappa)(I \times 0) = I \times 0$; therefore, $\hat{l}(\sigma,\kappa) \in \Aut_{I \times 0} I \rtimes_{T} Q$.

The extension $p_{2}: I \rtimes_{T} Q \rightarrow Q$ given by second-projection has the lifting $r: Q \rightarrow I \rtimes_{T} Q$ given by $r(x) = \left\langle 0,x \right\rangle$. Then we easily see that $\hat{l}(\sigma,\kappa)_{r} = p_{2} \circ \hat{l}(\sigma,\kappa) \circ r = \kappa$. If we allow for the identification $I$ with $I \times 0$, then $\hat{l}(\sigma,\kappa)|_{I \times 0} = \sigma$; altogether, we see that $\psi \left( \hat{l}(\sigma,\kappa) \right) = (\sigma,\kappa)$ and so have demonstrated exactness at $\Aut I \times \Aut Q$ and finished the proof of the theorem.
\end{proof}

\begin{acknowledgments}
The research in this manuscript was supported by NSF China Grant \#12071374.
\end{acknowledgments}

\vspace{0.4cm}

%%%%%%%%%%%%%%%%%%%%%%%%%%%%%%%%%%%%%%%%%%%%%%%%%%%%%%%%%%%%%%%%%%%%%%%%%%%%%%%%%%%%%%%%%%%%%%%%%%%%%%%%%%%%%%%%%%%%%%%%%%%%%%%%%%%%
\vspace{0.6cm}

\end{document}